\newcommand{\bb}[1]{\left({#1}\right)}					% ( )
\newcommand{\sq}[1]{\left[#1\right]}						% [ ]
\newcommand{\cc}[1]{\left\{#1\right\}}					% { }
\newcommand{\op}[1]{\mathcal{#1}}
\newcommand{\ord}[1]{{\sf O}\bb{#1}}					% order
\newcommand{\y}{\mathbf{y}}
\newcommand{\la}{\lambda}
\newcommand{\sgn}{\mathrm{sign}}
\newcommand{\R}{\ensuremath{\mathbb{R}}}
\newcommand{\fn}{K}
\newcommand{\ov}{\overline}
\newcommand{\de}{\delta}
\def\p{\partial}
\newcommand{\CO}{\ensuremath{\mathcal{O}}}
\newcommand{\CK}{\ensuremath{\mathcal{K}}}
\newtheorem{theorem}{Theorem}
\newtheorem{corollary}[theorem]{Corollary}
\newtheorem{example}{Example}
\newcommand{\man}{R}
\newcommand{\strip}{{\op I}}
\def\e{\varepsilon}
\begin{document}

\author{Douglas D. Novaes$^2$ and Mike R. Jeffrey$^1$}

\address{$^1$ Department of Engineering Mathematics,
University of Bristol, Merchant Venturer's Building, Bristol BS8 1UB, United Kingdom, email: mike.jeffrey@bristol.ac.uk}

\address{$^2$ Departamento de Matem\'{a}tica, Universidade Estadual de Campinas, Rua S\'{e}rgio Baruque de Holanda, 651, Cidade Universit\'{a}ria Zeferino Vaz, 13083--859, Campinas, SP,
Brazil, email: ddnovaes@ime.unicamp.br}

\title{Regularization of hidden dynamics in piecewise smooth flows}

\begin{abstract}
This paper studies the equivalence between differentiable and non-differentiable dynamics in $\mathbb R^n$. 
Filippov's theory of discontinuous differential equations allows us to find flow solutions of dynamical systems whose vector fields undergo switches at thresholds in phase space. The canonical {\it convex combination} at the discontinuity is only the linear part of a {\it nonlinear combination} that more fully explores Filippov's most general problem: the differential inclusion. Here we show how recent work relating discontinuous systems to singular limits of continuous (or {\it regularized}) systems extends to nonlinear combinations. We show that if sliding occurs in a discontinuous systems, there exists a differentiable slow-fast system with equivalent slow invariant dynamics. We also show the corresponding result for the {\it pinching} method, a converse to regularization which approximates a smooth system by a discontinuous one. 
\end{abstract}

%\subjclass{34A36, 37G15, 37C27, 37C30}
%\subjclass[2010]{37G15, 37C80, 37C30}

\keywords{nonlinear, sliding, discontinuous, hidden, nonsmooth, regularization, pinching, singular perturbation, slow-fast}

\maketitle

Consider an ordinary differential equation in $x\in\mathbb R^n$ with a discontinuous righthand side, 
\begin{equation}\label{s1}
\dot x=\left\{\begin{array}{l}
f^+(x)\quad \textrm{if}\quad h(x)>0,\\
f^-(x)\quad \textrm{if}\quad h(x)<0,
\end{array}\right.
\end{equation}
where $f^+$ and $f^-$ are continuous vector fields, and $h$ is a differentiable scalar function whose gradient $\nabla h$ is well-defined and non-vanishing everywhere. Throughout this paper we consider an open region $x\in D$ in which (\ref{s1}) holds. The set $\Sigma=\{x\in\op D:\,h(x)=0\}$ is called the {\it switching manifold}, and the regions either side of it are denoted as $\op R^\pm=\cc{x\in\op D:\;h(x)\gtrless0}$. 

The term `hidden dynamics' refers to what happens on $\Sigma$, specifically to behaviours governed by terms that disappear in $\op R^\pm$ (hence they are `hidden' in (\ref{s1})), and which go beyond Filippov's standard theory \cite{F}. The theory of Filippov relies heavily on two alternatives for extending (\ref{s1}) across $h=0$. The first is a differential inclusion
\begin{equation}\label{inc}
\dot x\in\mathcal F(x)\qquad {\rm s.t.}\quad f^+(x),f^-(x)\in\mathcal F(x)\;
\end{equation}
which is very general because $\op F$ is any set that contains $f^\pm$ ($\op F$ is usually assumed to be convex  to provide certain restrictions on sequences of solutions \cite{F}, but this does not prevent $\op F$ being arbitrarily large). The second alternative is a smaller set, the convex hull of $f^+$ and $f^-$,
\begin{equation}\label{s2c}
\begin{array}{l}
\dot x=Z(x;\lambda):=\dfrac{1+\la}{2}f^+(x)+\dfrac{1-\la}{2}f^-(x),\quad
\lambda\in\left\{\begin{array}{lll}{\rm sign}(h(x))&\rm if&h(x)\neq0\;,\\\sq{-1,+1}&\rm if&h(x)=0\;,\end{array}\right.
\end{array}
\end{equation}
which is very restrictive in the sense that it selects only values of (\ref{inc}) that are linear combinations of $f^\pm$. Examples of the set $\op F$ and hull $\cc{Z(x;\lambda):\lambda\in[-1,+1]}$ will be illustrated in Example \ref{eg:fil} below, along with a third alternative that unties them. 

We will refer to the transition as $h$ changes sign in (\ref{s2c}) as {\it linear switching} (implying linear dependence with respect to $\lambda$). In Filippov's theory, one seeks values of $\dot x$ in the sets (\ref{inc}) or (\ref{s2c}) that result in continuous (though typically non-differentiable) flows at $\Sigma$. In many situations of interest, the flow obtained from (\ref{s2c}) is unique (making possible, for example, substantial classifications of singularities and bifurcations for such systems \cite{F,T,bc08}).  

The problem highlighted in \cite{J} was that between the set-valued flow of (\ref{inc}) and the piecewise-smooth flow of (\ref{s2c}), a vast expanse of non-equivalent but no less valid dynamical systems can be considered. All that is lacking is a way to express them explicitly. This is provided quite simply by permitting nonlinear dependence on the transition parameter $\lambda$, in the form
\begin{equation}\label{s2}
%\begin{array}{l}
\dot x=f(x;\la):=\dfrac{1+\la}{2}f^+(x)+\dfrac{1-\la}{2}f^-(x)+G(x;\la),%\\
%\la={\rm sign}(h(x))\;,.
%\end{array}
\end{equation}
where
\begin{equation}\label{s2a}
 h(x)G(x;\la)=0\;,\qquad \lambda\in\left\{\begin{array}{lll}{\rm sign}(h(x))&\rm if&h(x)\neq0\;,\\\sq{-1,+1}&\rm if&h(x)=0\;,\end{array}\right.
\end{equation}
with $G$ some continuous vector field that is nonlinear in $\lambda$. An example of the set generated by $\cc{f(x;\lambda):\lambda\in[-1,+1]}$ is given in Example \ref{eg:fil} below. We shall refer to (\ref{s2}) as the nonlinear combination, and the transition it undergoes as $h$ changes sign as {\it nonlinear switching}. (Moreover the term `nonlinear' throughout this paper will refer to nonlinear dependence on $\lambda$ via the function $G$). 

\begin{example}\label{eg:fil}
Consider in coordinates $x=(x_1,x_2)$ the piecewise constant system (\ref{s1}) with vector fields
$f^+=(1,1)$, $f^-=(1,-2)$, and $G(\lambda)=(\lambda^2-1)(2,0)$, with $h(x)=x_1$. 
In Figure \ref{fig:fil} we illustrate a convex set $\op F$ satisfying (\ref{inc}), the linear combination $Z(x;\lambda)$ defined in (\ref{s2c}), and the nonlinear combination from (\ref{s2}), represented by the shaded region, dashed line, and dotted curve, respectively. By choosing different forms of $G$ (subject to $hG=0$) we can choose different curves $\cc{f(x;\lambda):\lambda\in[-1,+1]}$ which explore different subsets of $\op F$. 
\begin{figure}[h!]\centering\includegraphics[width=0.5\textwidth]{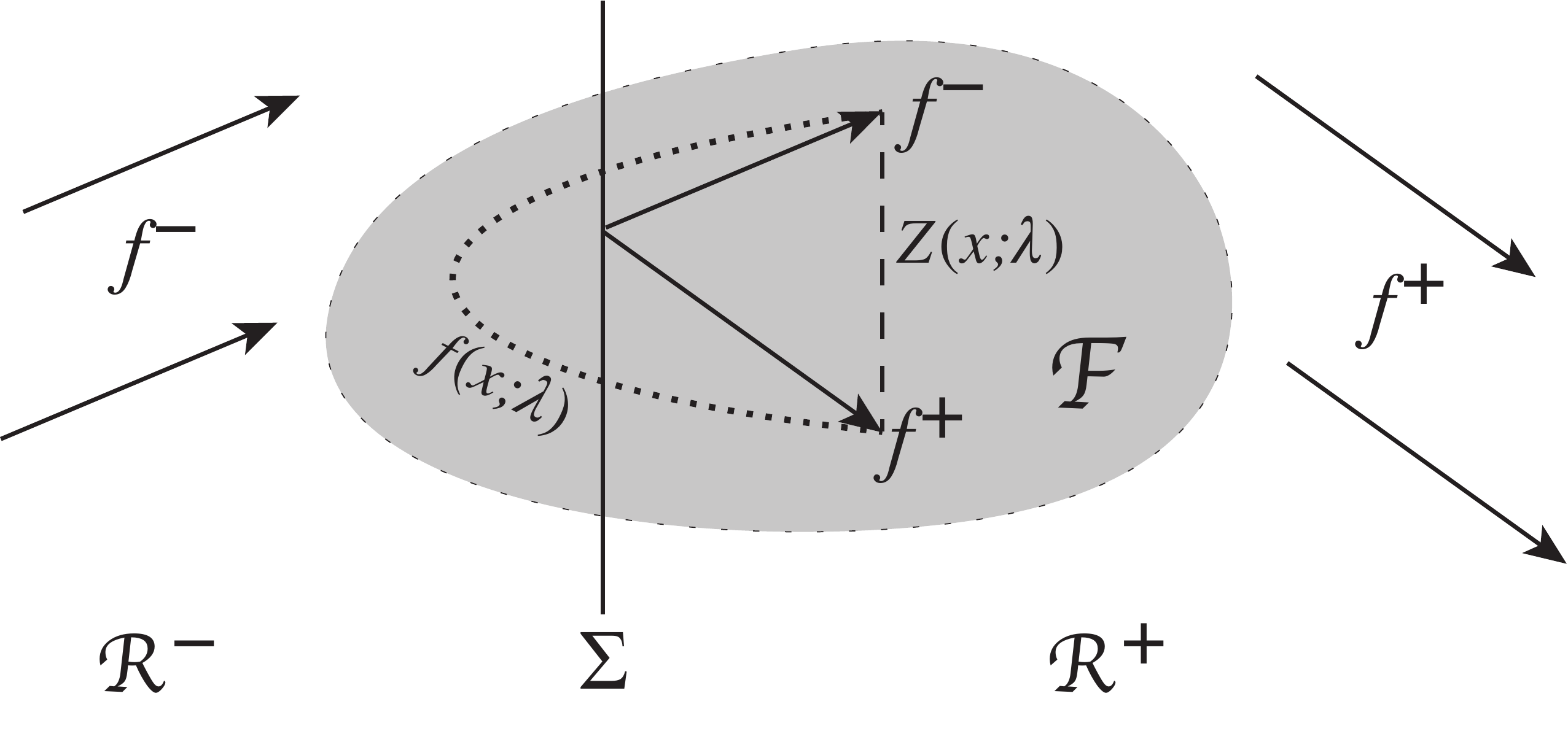}\vspace{-0.3cm}
\caption{\sf\footnotesize The vector field $f$ switches between $f^+$ and $f^-$ in regions $\op R^+$ and $\op R^-$. At the boundary $\Sigma$ Filippov considered either a general convex set $\op F$ containing $f^\pm$ (shaded area), or a convex hull $Z(x;\lambda)$ of $f^\pm$ (dashed line). The nonlinear combination $f(x;\lambda)$ allows us to explore $\op F$ more explicitly (dotted curve), by choosing a different $G$ we obtain a different curve of values $f(x;\lambda)\subset\op F$.}\label{fig:fil}
\end{figure}
\end{example}

Although Filippov (followed by many authors since) favoured (\ref{s2c}), it is worthwhile exploring the more general form (\ref{s2}), not least because in \cite{J,JSimp} it was shown to provide new ways of modeling real mechanical phenomena (namely static friction, the phenomenon that the force of dry-friction during sticking can exceed that during motion, not captured by applying Filippov's method to the basic discontinuous Coulomb friction law), and in \cite{H,Jiso} it is shown that similar nonlinearities become inescapable when multiple switches are involved (specifically it is shown that multiple switches create the possibility of multiple sliding solutions, which must be resolved by some kind of regularization or blow up of the discontinuity). It is therefore important obtain greater insight into the discontinuous dynamical systems represented by (\ref{s2}), one of the first concerns being typically their persistence within larger classes of systems. To this end it has been shown that the dynamics of (\ref{s2c}) persists when the discontinuity is {\it regularized} (i.e. smoothed) \cite{LST2} and, as we will show here, the same is equally true of the nonlinear combination (\ref{s2}).

%Our aim is to show here how recent results in the perturbation theory of (\ref{s2c}) can be extended to help with this problem. It has been shown that (\ref{s2c}) corresponds to the singular limit of classes of differential equations \cite{LST2} that are at least continuous, called {\it regularizations} of (\ref{s2c}); we shall call these {\it linear regularizations} of (\ref{s2c}) for reasons that will become clear. It has also been shown that discontinuous systems can be obtained by {\it pinching} the phase portraits of continuous systems \cite{DM,simic}, and part of our analysis here shall place this on a more rigorous footing. 

The behaviours associated with adding $G$ in (\ref{s2}) have been referred to as {\it hidden dynamics}, because the first condition in (\ref{s2a}) means that $G$ vanishes for $h\neq0$, i.e. everywhere except at the discontinuity itself. The function $G$ may, for example, be any finite vector field multiplied by a scalar term like $\lambda(\lambda^2-1)$, $\sin(\lambda^2-1)$, or $\lambda^{2r}-1$ for any natural number $r$. 

In this paper we will consider how the nonlinear combinations (\ref{s2}) relate to singular limits of continuous systems via both regularization \cite{LST2}, and a converse to regularization known as pinching \cite{simic,DM}. %, and lastly we will obtain more insight into the nonlinear features of (\ref{s2}) by a {\it blow-up} of $h=0$. 
%For (\ref{inc}) and (\ref{s2c}) one can define a flow on $\Sigma$. It may {\it cross} (or ``sew'') the discontinuity transversally from $\op R^-$ to $\op R^+$ or vice versa, or it may {\it slide} along $\Sigma$. The appropriate values of $\lambda$ for each behaviour are easily found. %Significant interest lies in how closely these approximate (or are approximated by) continuous systems. 
%The equivalent behaviour in differentiable systems appears to be the existence or not of an invariant manifold of dynamics in the neighbourhood of $\Sigma$, so m
We introduce both of these concepts below. 
Much of our analysis will concern the closeness of dynamics on $\Sigma$ in the discontinuous system (\ref{s2}) to invariant dynamics near $\Sigma$ in a topologically equivalent smooth system.

We set up the problem in Section \ref{sec:prelim}, then prove results regarding regularization and pinching in Sections \ref{sec:reg}-\ref{sec:pinch}. Brief remarks on blow-up, an alternative to both regularization and pinching which defines a dummy variable inside the discontinuity surface, are made in Section \ref{sec:blow}, with closing remarks in Section \ref{sec:conc}.

%%%%%%%%%%%%%%%%%%%%%%%%%%%%%%%%%%%%%%%%%%
%%%%%%%%%%%%%%%%%%%%%%%%%%%%%%%%%%%%%%%%%%
\section{Preliminaries: crossing or sliding in the nonlinear system}\label{sec:prelim}

The first step in studying (\ref{s2}) is to define more precisely what happens on $\Sigma$, our main interest being what happens when $G(x;\lambda)$ is allowed not to vanish there. We denote the interval of values taken by $\la$ as $\strip:=[-1,+1]$. 

Henceforth the symbol $p$ will always denote a point inside $\Sigma$, and where specific coordinates are useful we will sometimes let $h(x)=x_1$ and write $p=(0,\y)$. 

For any $p\in\Sigma$ we denote the component of $f$ normal to $\Sigma$ by the scalar function 
\begin{equation}\label{K}
\fn(p;\la):=f(p;\la)\cdot\nabla h(p)\;.
\end{equation}
This vanishes on the set 
\begin{equation}\label{slideset}
S(p):=\{\la^*\in\strip:\,\fn(p;\la^*)=0\},
\end{equation}
which may or may not have solutions for $\la^*\in\strip$. Places where there exist solutions to (\ref{slideset}) define regions where the vector field $f$ lies tangent to $\Sigma$ for one or more values of $\la^*\in\strip$, allowing the flow of (\ref{s2}) to {\it slide} along $\Sigma$, and we call  the set of all such points $p\in\Sigma$ the nonlinear {\it sliding region} $\Sigma^{ns}$, given by
\[
\Sigma^{ns}:=\{p\in\Sigma:\, S(p)\neq \emptyset\}. 
\]
The complement to this on $\Sigma$ is the set where (\ref{slideset}) has no solutions, so $f$ is transverse to $\Sigma$ for all $\lambda\in\strip$, defining the nonlinear {\it crossing region} $\Sigma^{nc}$, 
\[
\Sigma^{nc}:=\{p\in\Sigma:\, S(p)=\emptyset\}\;,
\]
such that $\Sigma=\ov{\Sigma^{ns}}\cup \ov{\Sigma^{nc}}$, ($\ov{\Sigma^{ns}}$ and $\ov{\Sigma^{nc}}$ denoting the closures of ${\Sigma^{ns}}$ and ${\Sigma^{nc}}$). 

The implication is that for $p\in\Sigma^{nc}$ the vector field $f(p;\la)$ pushes the flow transversally across $\Sigma$ between $\op R^+$ and $\op R^-$, while for $p\in\Sigma^{ns}$ the flow is able to slide along $\Sigma$. Substituting the solution $\lambda^*$ of (\ref{slideset}) into (\ref{s2}), the system that defines these nonlinear {\it sliding modes} is given by 
\begin{equation}\label{s4}
\dot p=f^{ns}(p):=f(p;\la^*(p))\;,\qquad\la^*(p)\in S(p)\;,
\end{equation}
with $f^{ns}$ defining the nonlinear {\it sliding vector field}. 
%$f(p;\la*(p))$ with $\lambda^*(p)\in S(p)$, such that $\fn(p;\la^*(p))=0$ for every $p\in \Sigma^{ns}$. 
Typically there may exist a set of such functions $\lambda^*_i$, $i=1,2,...$, defining branches of solutions of $\fn(p;\lambda^*)=0$ in (\ref{slideset}), each on a subset $\sigma_i\subset\Sigma^{ns}$, such that the union of all $\sigma_i$'s covers $\Sigma^{ns}$, and $\lambda^*_i:\sigma_i\subset\Sigma^{ns}\mapsto\strip$. We then have a set of sliding modes specified by a set of equations defined by (\ref{s4}) on different branches $p\in\sigma_i$. 

If we fix $G\equiv0$ everywhere then the sliding region $\Sigma^{ns}$ and crossing region $\Sigma^{nc}$ are exactly the sliding and crossing regions defined by the Filippov's convention for the system (\ref{s2c}), which we therefore call the {\it linear crossing region} $\Sigma^c$ and {\it linear sliding region} $\Sigma^s$ (obtained directly by solving the above conditions neglecting $G$). The linear system (i.e. without $G$) can only have one (linear) sliding mode, on $\Sigma^{s}$, while the full system ($G$ nonzero on $\Sigma$) may have multiple (nonlinear) sliding modes as defined by (\ref{s4}) with (\ref{s2}). It is easily shown (see \cite{J}) that $\Sigma^s\subseteq\Sigma^{ns}$ and $\Sigma^{nc}\subseteq\Sigma^c$.

%%%%%%%%%%%%%%%%%%%%%%%%%%%%%%%%%%%%%%%%%%
%%%%%%%%%%%%%%%%%%%%%%%%%%%%%%%%%%%%%%%%%%
\section{Regularization}\label{sec:reg}

Let us first show that regularizations of the linear combination (\ref{s2c}) or of the nonlinear combination (\ref{s2}) can be related by a simple substitution. % using non-trivial transition functions. 

Let $C^r$ denote the class of $r$-times differentiable functions. 
We shall denote by 
\begin{equation*}\begin{array}{ll}
%\item[(a)] $\Lambda:\R\rightarrow\R$ a continuous function such that $\Lambda(s)=-1$ if $s\leq-1$, $\Lambda(s)=1$ if $s\geq 1$ and $\Lambda(s)=s$ if $s\in(-1,1)$. We call $\Lambda$ by {\it piecewise linear transition function}
\psi:\R\rightarrow\R&\mbox{ a continuous function which is $C^1$ for $s\in(-1,1)$}\\%\vspace{-0.4cm}\\
&\mbox{ such that $\psi(s)=\sgn(s)$ for $|s|\geq1$.}\\% \vspace{-0.4cm}\\
&\mbox{We call $\psi$ a {\it transition function}}.\\%\vspace{-0.4cm}\\
\phi:\R\rightarrow\R&\mbox{ a continuous function which is $C^1$ for $s\in(-1,1)$}\\%\vspace{-0.4cm}\\
&\mbox{ such that $\phi(s)=\sgn(s)$ for $|s|\geq1$, and $\phi'(s)>0$ for $s\in(-1,1)$.}\\% \vspace{-0.4cm}\\
&\mbox{We call $\phi$ a {\it monotonic transition function}.}
\end{array}\end{equation*}
 We also let
 \begin{equation}\label{phid}
\phi_{\de}(h):=\phi(h/\de)\qquad{\rm and}\qquad\psi_{\de}(h):=\psi(h/\de)\;.
\end{equation}

A regularization of a discontinuous system \eqref{s2c} or \eqref{s2} is a one--parameter family $Z_{\de}\in C^r$ for $r\geq0$ such that $f_{\de}$ converges to the discontinuous system when $\de\to 0$. The intention is that this represents a class of continuous functions approximated by (\ref{s1}) as $\de\to0$, the importance of (\ref{s2}) is that it will show this class to be larger than those derived from (\ref{s2c}). 
%We call these regularizations by {\it monotonic regularization} (or simply $\phi$--regularization) and {\it non--monotonic regularization} (or simply $\psi$--regularization), respectively.
The Sotomayor-Teixeira method of regularization, see e.g. \cite{ST}, replaces $\lambda$ in \eqref{s2c} by a monotonic transition function $\phi$, to consider
\begin{equation*}%\label{st}
\dot x=\dfrac{1+\phi_{\de}(h(x))}{2}f^+(x)+\dfrac{1-\phi_{\de}(h(x))}{2}f^-(x).
\end{equation*}
We refer to this as a {\it linear}-regularization (or $\phi$--regularization in other references). 
It is shown in \cite{feck,BST,LST2} that this defines a system with slow invariant dynamics topologically equivalent to Filippov's (linear) sliding dynamics. One may ask what happens if we consider instead (\ref{s2c}) with a non-monotonic transition function $\psi$. When modeling a physical system, for example, there is no clear reason to exclude such possibilities, and we shall see below how they fit with established theory for discontinuous differential equations. 

We will show that the (non-monotonic) $\psi$ regularization of Filippov's linear combination (\ref{s2c}), 
\begin{equation}\label{nonreg}
\dot x=Z_{\de}(x):=\dfrac{1+\psi_{\de}(h(x))}{2}f^+(x)+\dfrac{1-\psi_{\de}(h(x))}{2}f^-(x)\qquad\qquad\qquad\qquad
\end{equation}
is equivalent to the (monotonic) $\phi$ regularization of a nonlinear combination (\ref{s2}), given by
$f_\delta(x)=f(x;\phi(h(x)/\de))$, i.e. 
\begin{equation}\label{reg}
\dot x=f_{\de}(x):=\dfrac{1+\phi_{\de}(h(x))}{2}f^+(x)+\dfrac{1-\phi_{\de}(h(x))}{2}f^-(x)+G(x;\phi_\de(h(x)))\;.
\end{equation}

\begin{theorem}\label{t1}
If $\phi$ is a monotonic transition function and $\psi$ is a non--monotonic transition function, then there exists a unique function $G(x;\la)$ satisfying (\ref{s2a}) such that the $\psi$--regularization of \eqref{s2c} is a $\phi$--regularization of \eqref{s2}. 
\end{theorem}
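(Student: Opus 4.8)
The plan is to read the theorem as a demand that the two regularized fields coincide, $Z_\de(x)=f_\de(x)$, and then solve that equation for $G$. Subtracting \eqref{reg} from \eqref{nonreg}, the two convex-combination parts are identical except that one carries $\psi_\de(h)$ and the other $\phi_\de(h)$, so everything cancels except a single scalar multiple of $f^+-f^-$, and the requirement collapses to
\begin{equation*}
G\bigl(x;\phi_\de(h(x))\bigr)=\tfrac12\bigl(\psi_\de(h(x))-\phi_\de(h(x))\bigr)\bigl(f^+(x)-f^-(x)\bigr).
\end{equation*}
Thus the entire discrepancy between regularizing with the non-monotonic $\psi$ and the monotonic $\phi$ is to be absorbed into the hidden term $G$, and this identity must hold for every $x$ and every $\de>0$, evaluating $G(x;\cdot)$ only at the argument $\la=\phi_\de(h(x))$.

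The key step is to decouple the two arguments $x$ and $\la$ of $G$. Because $\phi$ is a \emph{monotonic} transition function, $\phi'>0$ on $(-1,1)$ makes it a homeomorphism of $[-1,1]$ onto itself with $\phi(\pm1)=\pm1$; note that $\psi$ is never inverted, so its non-monotonicity is no obstruction. Writing $s=h(x)/\de$ and $\la=\phi_\de(h(x))=\phi(s)$, I invert to obtain $s=\phi^{-1}(\la)$ for $\la\in(-1,1)$ and substitute, which yields the explicit candidate
\begin{equation*}
G(x;\la):=\tfrac12\Bigl(\psi\bigl(\phi^{-1}(\la)\bigr)-\la\Bigr)\bigl(f^+(x)-f^-(x)\bigr).
\end{equation*}
By construction this reproduces the boxed identity inside the layer $|h|<\de$, while for $|h|\ge\de$ one has $\phi_\de(h)=\psi_\de(h)=\sgn(h)$, so both regularizations reduce to $f^\pm$ and agree trivially.

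I would then verify that this $G$ satisfies \eqref{s2a} and the standing hypotheses on the nonlinear combination. It is continuous in $x$ and $C^1$ in $\la$ on $(-1,1)$ as a composition of the $C^1$ maps $\psi$ and $\phi^{-1}$; at the endpoints $\psi(\phi^{-1}(\pm1))=\psi(\pm1)=\pm1$, so the scalar factor vanishes and $G(x;\pm1)=0$. Since $\la=\sgn(h)=\pm1$ exactly where $h\neq0$, this gives $h(x)\,G(x;\la)=0$ as required. Finally the combination is \emph{genuinely} nonlinear: $\psi$ non-monotonic and $\phi$ monotonic force $\psi\neq\phi$, hence $\psi\circ\phi^{-1}\neq\mathrm{id}$ and the scalar factor does not vanish identically.

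For uniqueness, the cleanest route is the collapse of the layer onto $\Sigma$ as $\de\to0$, which is also where $G$ carries its meaning. Fixing $p\in\Sigma$ and any target $\la\in(-1,1)$, set $s=\phi^{-1}(\la)$ and take the layer point $x$ with $h(x)=\de s$; then $\phi_\de(h(x))=\phi(s)=\la$ identically in $\de$, while $x\to p$ as $\de\to0$. Passing to the limit in the boxed identity, and using continuity of $G$ and of $f^\pm$, forces $G(p;\la)=\tfrac12(\psi(\phi^{-1}(\la))-\la)(f^+(p)-f^-(p))$, so every admissible $\la$ is realized on $\Sigma$ and the formula above is the only possibility. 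I expect the main obstacle to be precisely this uniqueness bookkeeping: for a fixed off-$\Sigma$ point the regularization only samples a one-sided sub-interval of $\la$-values, so one must argue uniqueness on the relevant set $\Sigma\times\strip$ via the limiting/continuity argument rather than pointwise off $\Sigma$, and check that the endpoint conditions from \eqref{s2a} mesh consistently with these interior limits.
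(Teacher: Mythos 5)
Your proposal is correct and follows essentially the same route as the paper: invert the monotonic $\phi$, set $\Psi=\psi\circ\phi^{-1}$, and read off $G(x;\la)=\tfrac12\bigl(\Psi(\la)-\la\bigr)\bigl(f^+(x)-f^-(x)\bigr)$, checking that $G(x;\pm1)=0$ gives \eqref{s2a}. Your additional limiting argument for uniqueness on $\Sigma\times\strip$ is a welcome refinement that the paper's proof leaves implicit in the construction.
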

\begin{proof}
Let $\lambda=\phi(s)$, the function $\phi$ is monotonic in the interval $\strip$ and therefore has an inverse $s=\phi^{-1}(\lambda)$, so we can express $\psi$ in terms of $\la$ via a function $\Psi(\lambda)=\psi\left(\phi^{-1}(\lambda)\right)$. 
The $\psi$--regularization of (\ref{s2c}) as given by (\ref{nonreg}) can thus be re-arranged to
\[
\dot x=\dfrac{1+\lambda}{2}f^+(x)+\dfrac{1-\lambda}{2}f^-(x)+\left(\Psi(\lambda)-\lambda\right)\dfrac{f^+(x)-f^-(x)}{2}\;.
\]
If we define $G(x;\la)=\left(\Psi(\la)-\la\right)\left({f^+(x)-f^-(x)}\right)/{2}$, we obtain the nonlinear combination (\ref{s2}), and taking $\lambda=\phi_\delta(h(x))$ we obtain its $\phi$--regularization on $\lambda\in\strip$. Since for $|s|\geq1$ we have $\lambda=\phi(s)=\psi(s)=\sgn(s)$, this implies $G(x;\pm 1)=0$ as required by (\ref{s2a}). 
\end{proof}

%A simple consequence of this is that the nonlinear combinations (\ref{s2}), when $\phi$--regularized, represent a more general class of systems than (\ref{s2c}) when $\psi$--regularized. This is shown by following. 
A simple consequence of this is that the family of $\phi$--regularized nonlinear combinations (\ref{s2}) is larger than the family of $\psi$--regularized linear combinations (\ref{s2c}), as shown by the following.

\begin{corollary}\label{c1}
If $\phi$ is a monotonic transition function, then there exists a non--monotonic transition function $\psi$ such that the $\phi$--regularization of \eqref{s2} is a $\psi$--regularization of \eqref{s2c}, if and only if $G(x;\la)=\gamma(\la)\left({f^+(x)-f^-(x)}\right)/{2}$ such that $h(x)\gamma(\lambda)=0$. 
\end{corollary}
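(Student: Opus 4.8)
The plan is to work backwards from Theorem~\ref{t1}, which already establishes that a $\psi$--regularization of \eqref{s2c} can be rewritten as a $\phi$--regularization of \eqref{s2} with the specific choice $G(x;\la)=\left(\Psi(\la)-\la\right)\left(f^+(x)-f^-(x)\right)/2$, where $\Psi=\psi\circ\phi^{-1}$. The corollary asks the converse: given a monotonic $\phi$ and an arbitrary admissible $G$, when can the $\phi$--regularization of \eqref{s2} be realized as a $\psi$--regularization of \eqref{s2c} for some non--monotonic $\psi$? So first I would write out both sides explicitly and equate them, treating it as solving for the unknown transition function $\psi$.

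The key computation is to set the $\phi$--regularization of \eqref{s2}, namely $f_\de(x)=\tfrac{1+\phi_\de}{2}f^++\tfrac{1-\phi_\de}{2}f^-+G(x;\phi_\de(h(x)))$, equal to a putative $\psi$--regularization $Z_\de(x)=\tfrac{1+\psi_\de}{2}f^++\tfrac{1-\psi_\de}{2}f^-$. Subtracting the linear parts and writing $\la=\phi_\de(h(x))$, this forces the identity $G(x;\la)=\tfrac{\psi_\de(h(x))-\la}{2}\left(f^+(x)-f^-(x)\right)$. Since $\phi$ is invertible on $\strip$, I can define $\gamma(\la):=\psi(\phi^{-1}(\la))-\la$, so that the requirement becomes exactly $G(x;\la)=\gamma(\la)\left(f^+(x)-f^-(x)\right)/2$. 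This is the claimed form, and it shows the "only if" direction: for such a $\psi$ to exist, $G$ \emph{must} be a scalar multiple of $f^+-f^-$ with the scalar depending only on $\la$. The constraint $h(x)\gamma(\la)=0$ is simply the requirement \eqref{s2a} that $hG=0$ transcribed to this factored form (recalling $f^+-f^-$ need not vanish on $\Sigma$).

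For the "if" direction I would reverse the construction: given $G(x;\la)=\gamma(\la)\left(f^+-f^-\right)/2$ with $h\gamma=0$, define $\psi$ by $\psi(s):=\phi(s)+\gamma(\phi(s))$, equivalently $\Psi(\la)=\la+\gamma(\la)$, and then set $\psi_\de(h)=\psi(h/\de)$. Substituting back recovers \eqref{reg} as a $\psi$--regularization of \eqref{s2c}, so the construction is consistent. The remaining obligations are to verify that this $\psi$ is a legitimate transition function: continuity and $C^1$ on $(-1,1)$ follow from those properties of $\phi$ and $\gamma$, while the boundary behaviour $\psi(s)=\sgn(s)$ for $|s|\ge1$ follows from $\phi(\pm1)=\pm1$ together with $\gamma(\pm1)=0$ (which is forced by $G(x;\pm1)=0$, exactly as in the proof of Theorem~\ref{t1}).

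The main obstacle I expect is not the algebra but pinning down the \emph{non--monotonicity} clause and the precise role of the constraint $h\gamma=0$. I would note that $\psi=\phi+\gamma\circ\phi$ is generically non--monotonic precisely because $\gamma$ is nonconstant and need not preserve monotonicity; one should check that the degenerate case $\gamma\equiv0$ (which gives $\psi=\phi$, monotonic, i.e. $G\equiv0$) is handled or excluded appropriately, since the corollary's statement tacitly assumes a genuine non--monotonic $\psi$. The condition $h(x)\gamma(\la)=0$ deserves care: since $f^+-f^-$ does not generally vanish on $\Sigma$, the factorized $G$ inherits the \eqref{s2a} constraint entirely through $\gamma$, so away from $\Sigma$ one needs $\gamma(\sgn h)=0$, consistent with $\gamma(\pm1)=0$. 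Making sure these boundary and support conditions match up cleanly between the $G$--formulation and the $\psi$--formulation is the delicate bookkeeping step, but it reduces to the same $\pm1$ endpoint argument already used for Theorem~\ref{t1}.
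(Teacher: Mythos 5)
Your argument is correct and is essentially the paper's proof written out in full: the paper disposes of the corollary in one line ("substitute $G$ into \eqref{reg} and apply Theorem~\ref{t1}"), which amounts exactly to your identification $\Psi(\la)=\la+\gamma(\la)$, i.e.\ $\psi(s)=\phi(s)+\gamma(\phi(s))$, together with the endpoint check $\gamma(\pm1)=0$. Your side remarks (the degenerate case $\gamma\equiv0$ giving a monotonic $\psi$, and the fact that the constraint $hG=0$ is carried entirely by $\gamma$) are accurate observations about the statement that the paper leaves implicit.
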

\begin{proof}
The proof follows directly by substituting $G$ into (\ref{reg}) and applying Theorem \ref{t1}.
\end{proof}

Figure \ref{fig:scheme} provides the resulting schematic of how the discontinuous systems and their regularizations considered above fit together. 
\begin{figure}[h!]\centering\includegraphics[width=0.7\textwidth]{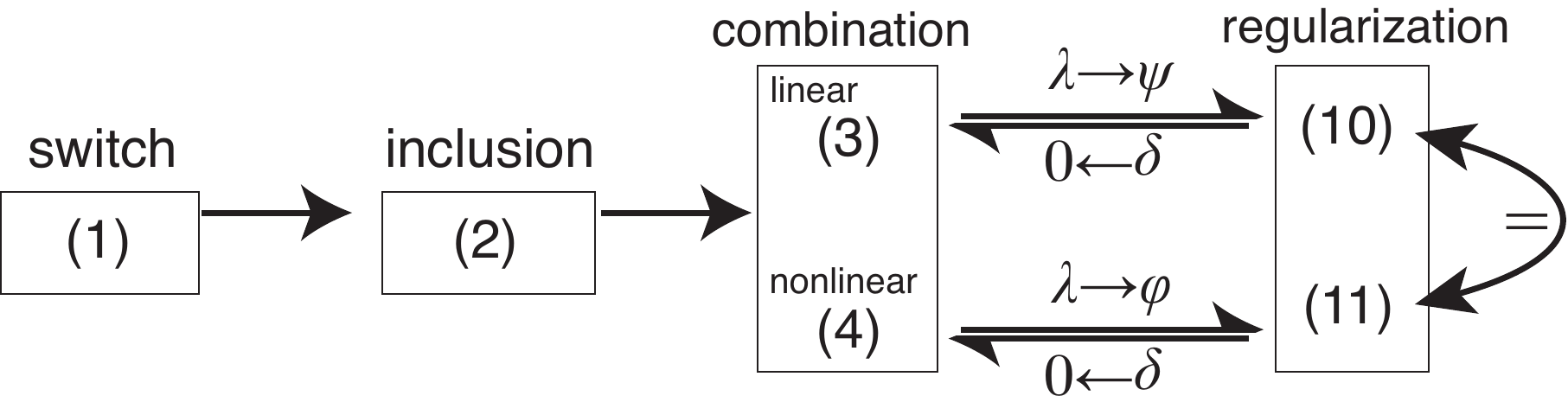}\vspace{-0.3cm}
\caption{\sf\footnotesize The discontinuous differential equation (\ref{s1}) is not defined on $\Sigma$, so is replaced by the inclusion (\ref{inc}), representing all possible systems at $\Sigma$. A solvable form for these is provided by the Filippov systems in the linear form (\ref{s2c}) or more general nonlinear form (\ref{s2}). In the following sections we applying a regularization of nonlinear or linear kind, yielding the differentiable systems (\ref{nonreg}) and (\ref{reg}) respectively, which are equivalent for some choice of transition functions $\phi_\delta$ and $\psi_\delta$, and conversely whose singular limits as $\delta\rightarrow0$ are (\ref{s2c}) and (\ref{s2}).}\label{fig:scheme}
\end{figure}

\bigskip

In the next theorem we extend the Sotomayor-Teixeira result to these systems, showing that the nonlinear regularization (\ref{reg}) exhibits slow invariant dynamics that is conjugate to the sliding modes of the discontinuous system (\ref{s2c}). The remainder of this section will consist of the proof of this theorem. First let us see how slow-fast dynamics arises in an example. 

\begin{example}
Consider the system
$$(\dot x_1,\dot x_2)=\frac{1+\lambda}2(1,-2)+\frac{1-\lambda}2(1,1)+(\lambda^2-1)(2,0)\;,$$
which is discontinuous if $\lambda={\rm sign}(x_1)$ for $x_1\neq0$. The regularization is obtained by replacing $\lambda\mapsto\phi_\delta(x_1)$ for small $\delta>0$. Figure \ref{fig:reg} shows the discontinuous system (left) with a nonlinear sliding region on which two sliding modes exist (one traveling upwards, the other downwards), and conjugate to each sliding mode. Compare this to the discontinuous linear and nonlinear systems in Example \ref{eg:fil}.  
\begin{figure}[h!]\centering\includegraphics[width=0.6\textwidth]{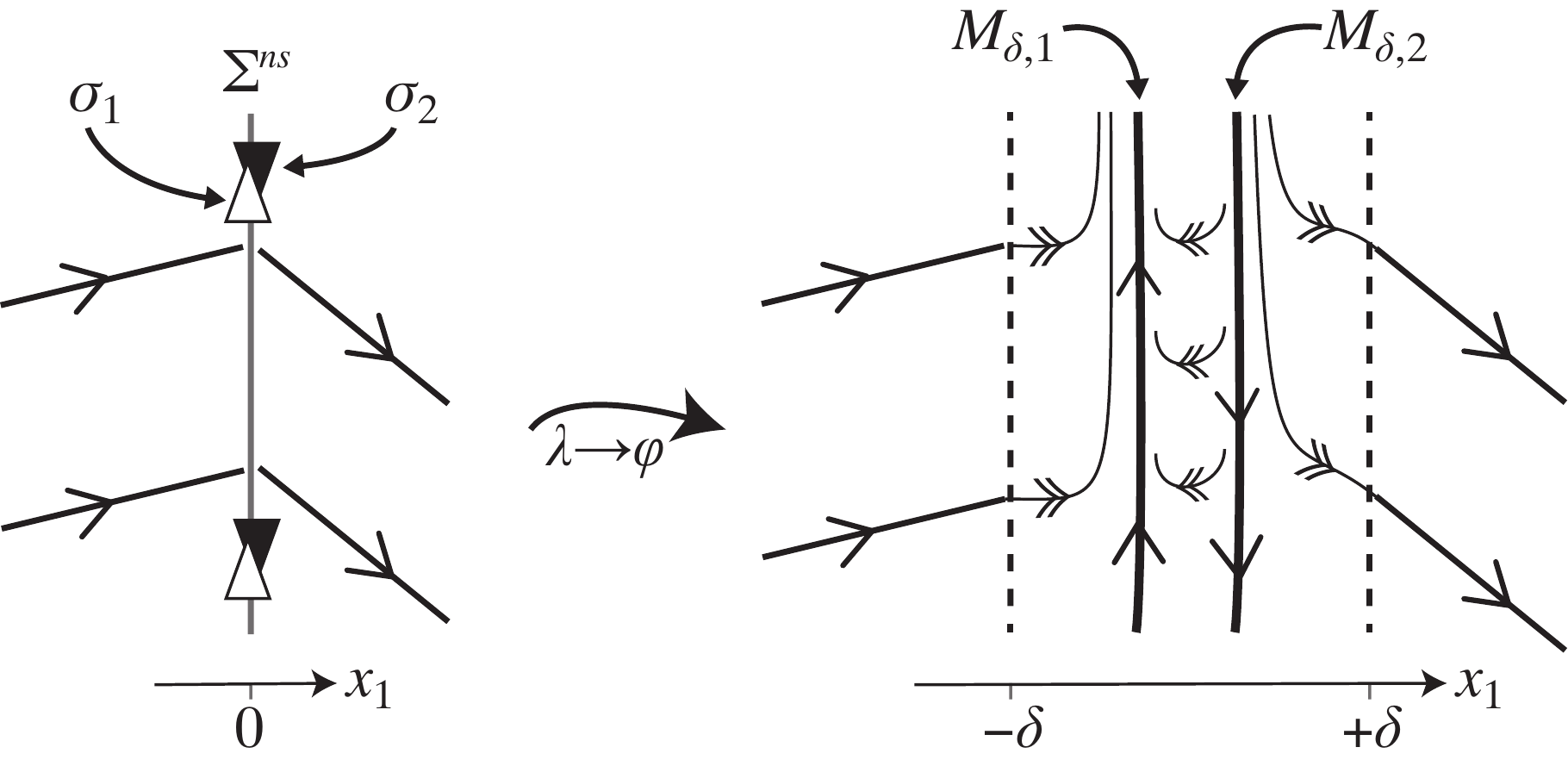}\vspace{-0.3cm}
\caption{\sf\footnotesize Left: a discontinuous system (\ref{s2}) with nonlinear sliding region with branches $\sigma_r$ for $r=1,2$ (white and black filled arrows). Right: the regularization in which each sliding branch $\sigma_k$ is conjugate to an invariant manifold $M_{\delta,k}$ of a slow-fast system (\ref{reg}). }\label{fig:reg}
\end{figure}
\end{example}

\begin{theorem}\label{t2}
Let the region $\sigma\subset\Sigma^{ns}$ be expressible as a graph $x_1=0$ in coordinates $x=(x_1,x_2,..,x_2)$, %If we assume on $\sigma_i$ that there exists a function $\la^*(p)$ such that $\fn(p;\la^*(p))=0$ for every $p\in\Sigma^{ns}$, t
on which there exists a function $\la^*(p)$ such that $\fn(p;\la^*(p))=0$ in (\ref{K}) for every $p\in\sigma$. 
Then for any $C^r$ (or continuous) function $\phi$, the $\phi$--regularization contains a slow manifold $C^r$--diffeomorphic (homeomorphic) to $\sigma$, on which the slow dynamics is $C^r$--conjugated (topologically conjugated) to the nonlinear sliding dynamics (\ref{s4}). Moreover, if $\p \fn(p;\la^*(p))/\p \la\neq0$ then for $\de>0$ sufficiently small the nonlinear sliding dynamics defined on $\Sigma^{ns}$ persists to order $\de$, on a manifold $M_{\de}$ which is $\de$--close to $\Sigma^{ns}$.
\end{theorem}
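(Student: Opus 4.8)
The plan is to recast the $\phi$--regularization (\ref{reg}) as a singularly perturbed slow--fast system and then apply geometric singular perturbation (Fenichel) theory. In the coordinates $x=(x_1,\y)$ with $h(x)=x_1$ we have $\nabla h=(1,0,\ldots,0)$, so the component of $f$ normal to $\Sigma$ is simply its first component $f_1$, which on $\Sigma$ equals $\fn$ in (\ref{K}). Splitting (\ref{reg}) into normal and tangential parts and introducing the fast variable $s=x_1/\de$ and fast time $\tau=t/\de$, the system becomes
\begin{equation*}
s'=f_1\bb{\de s,\y;\phi(s)},\qquad \y'=\de\,f_\y\bb{\de s,\y;\phi(s)},
\end{equation*}
where $f_\y$ denotes the components of $f$ tangent to $\Sigma$ and a prime is $d/d\tau$. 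This is the standard fast formulation; multiplying by $\de$ and reverting to slow time $t$ gives the equivalent slow formulation.

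In the singular limit $\de\to0$ the layer problem is $s'=\fn(0,\y;\phi(s))$, $\y'=0$, whose set of equilibria is the critical manifold $M_0=\{(s,\y):\fn(0,\y;\phi(s))=0\}$. Using the hypothesis that $\fn(p;\la^*(p))=0$ on $\sigma$ together with the monotonicity of $\phi$ (so $\phi^{-1}$ exists on $(-1,1)$, with $\phi(\pm1)=\pm1$), I would solve $\phi(s)=\la^*(\y)$ to write $M_0$ as the graph $s=s^*(\y):=\phi^{-1}(\la^*(\y))$ over $\sigma$. The map $\y\mapsto(s^*(\y),\y)$ is then a $C^r$--diffeomorphism (a homeomorphism when $\phi$ is merely continuous) of $\sigma$ onto $M_0$. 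The reduced flow on $M_0$, in slow time, is $\dot\y=f_\y(0,\y;\la^*(\y))$; since the nonlinear sliding vector field (\ref{s4}) has vanishing normal component and tangential part $f_\y(p;\la^*(p))$, this reduced flow is exactly the sliding dynamics read through the graph map. This establishes the first assertion, the conjugacy inheriting the regularity of $\phi$.

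For the persistence statement I would verify normal hyperbolicity of $M_0$. Differentiating the layer equation transverse to $M_0$ gives the single nontrivial eigenvalue
\begin{equation*}
\frac{\p}{\p s}\fn\bb{0,\y;\phi(s)}\bigg|_{s=s^*(\y)}=\frac{\p\fn}{\p\la}\bb{0,\y;\la^*(\y)}\,\phi'\bb{s^*(\y)}.
\end{equation*}
Since $\phi$ is a monotonic transition function, $\phi'>0$ on $(-1,1)$, so this eigenvalue is nonzero precisely under the hypothesis $\p\fn(p;\la^*(p))/\p\la\neq0$; hence $M_0$ (or each branch $\sigma_i$ separately) is normally hyperbolic, attracting or repelling according to the sign of the product. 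Fenichel's theorem then provides, for $\de>0$ small, a locally invariant slow manifold $M_\de$ that is $C^r$--diffeomorphic to $M_0$ and $O(\de)$--close to it, carrying a reduced flow that is an $O(\de)$ perturbation of $\dot\y=f_\y(0,\y;\la^*(\y))$. In the original coordinates $M_\de$ lies at $x_1=\de s^*(\y)+O(\de^2)=O(\de)$, so it is $\de$--close to $\Sigma^{ns}=\{x_1=0\}$ and the sliding dynamics persists to order $\de$, proving the second assertion.

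I expect the main obstacle to be the step identifying the analytic hypothesis with the geometric nondegeneracy: one must check that $\p\fn/\p\la\neq0$ together with $\phi'>0$ is exactly normal hyperbolicity of the critical manifold, and treat carefully the degenerate boundary of $\sigma$ where $\la^*\to\pm1$, so that $s^*\to\pm1$ and $M_0$ meets the region $|s|\ge1$ on which $\phi$ is constant and the graph construction breaks down. The remaining work is bookkeeping --- tracking how the diffeomorphism and conjugacy inherit the regularity of $\phi$, and noting that in the merely continuous case, where Fenichel does not apply, only the first (reduced--problem) assertion is claimed.
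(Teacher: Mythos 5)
Your proposal is correct and follows essentially the same route as the paper's proof: rescale $x_1$ by $\de$ to obtain a slow--fast system, identify the critical manifold as the graph $s=\phi^{-1}(\la^*(\y))$ using monotonicity of $\phi$, read off the conjugacy of the reduced flow with the sliding dynamics through that graph map, and invoke normal hyperbolicity plus Fenichel for persistence. Your explicit eigenvalue computation $(\p\fn/\p\la)\,\phi'(s^*(\y))$ and the remark about the boundary where $\la^*\to\pm1$ are slightly more careful than the paper's treatment, but they do not constitute a different argument.
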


\begin{proof}
%The switching manifold $\Sigma$ is the hyperplane $x_1=0$. 
%thus without loss of generality we can assume that there exists a sufficiently smooth function $F$ such that $\Sigma=\{(F(X_2,X_3,\ldots,X_n),X_2,X_3,\ldots,X_n)\in\op D\}$. Moreover the function $h$ can be assumed to be $h(X)=X_1-F(X_2,X_3,\ldots,X_n),X_2,X_3,\ldots,X_n)$. Now considering the change of variables $X_1=x_1-F(X_2,X_3,\ldots,X_n)$, $X_i=x_i$ we obtain a conjugated system such that the switching manifold is given by $x_1=0$. So we can assume without loss of generality that the switching manifold is given by $x_1=0$, that is $h(x)=x_1$.Assuming $h(x)=x_1$ we have that 
In the coordinates given, $\sigma\subset\Sigma^{ns}$ is an open subset of the hyperplane $\{p=(0,x_2,x_3,\ldots,x_n)%=(0,v)
\in\op D\}$. Writing vector components as $f=(f_1,f_2,...,f_n)$ for any function $f$, the normal component (\ref{K}) of the nonlinear combination (\ref{s2}) is
\begin{equation}\label{t1k}
\fn(p;\la)=\dfrac{1+\la}{2}f_1^+(p)+\dfrac{1-\la}{2}f_1^-(p)+G_1(p;\la).
\end{equation}
Sliding modes by (\ref{slideset})-(\ref{s4}) satisfy the differential-algebraic system
\begin{equation}\label{t1f}
\begin{array}{rl}
0=&\!\!\!f_1(p;\la^*(p))\\
\dot p_i=&\!\!\!%f_i(p;\la^*(p))=
\dfrac{1+\la^*(p)}{2}f_i^+(p)+\dfrac{1-\la^*(p)}{2}f_i^-(p)+G_i(p;\la^*(p))
\end{array}
\end{equation}
for $i=2,3,...,n$. %, and since we assume there exists a sliding region $\Sigma^{ns}$, there exists at least one function $\lambda^*(p)$ satisfying (\ref{slideset}). 
%Here we are denoting $x=(x_1,x_2,\dots,x_n)$, $f=(f_1,f_2,\dots,f_n)$ and $G=(G_1,G_2,\dots,$ $G_n)$.

Now consider the $\phi$--regularization of \eqref{s2}, given by
\begin{equation}\label{t1r1}
\dot x_i=\dfrac{1+\phi_{\de}(x_1)}{2}f_i^+(x)+\dfrac{1-\phi_{\de}(x_1)}{2}f_i^+(x)+ G_i(x;\phi_{\de}(x_1))
\end{equation}
for $i=1,...,n$. 
By a change of variables to $u=x_1/\de$ and $v=(x_2,x_3,\ldots,x_n)$ for small $\delta>0$, we obtain
\begin{equation}\label{t1sp}
\begin{array}{rl}
\de \dot u=&\dfrac{1+\phi(u)}{2}f_1^+(u\de,v)+\dfrac{1-\phi(u)}{2}f_1^+(u\de,v)+ G_1(u\de,v;\phi(u)),\\
\dot x_i=&\dfrac{1+\phi(u)}{2}f_i^+(u\de,v)+\dfrac{1-\phi(u)}{2}f_i^+(u\de,v)+ G_i(u\de,v;\phi(u)),
\end{array}
\end{equation}
where $\delta$ is a singular perturbation parameter. 
In the limit $\delta=0$ we obtain the so-called reduced problem (using the notation $x=p$ on $\Sigma$)
\begin{equation}\label{t1rp}
\begin{array}{rl}
0=&\dfrac{1+\phi(u)}{2}f_1^+(p)+\dfrac{1-\phi(u)}{2}f_1^+(p)+ G_1(p;\phi(u))=\fn(p;\phi(u)),\\
\dot p_i=&\dfrac{1+\phi(u)}{2}f_i^+(p)+\dfrac{1-\phi(u)}{2}f_i^+(p)+ G_i(p;\phi(u)),\;\;\;i=2,...,n,
\end{array}
\end{equation}
which describes dynamics on the `slow' timescale $t$ (for standard concepts of singularly perturbed or slow-fast systems see \cite{fenichel,jones}). This dynamics inhabits a hypersurface called the {\it slow critical manifold}, defined implicitly by $0=\fn(p;\phi(u))$ in the first row of (\ref{t1rp}). 
%Each slow manifold related to the reduced problem \eqref{t1rp} is given implicit by the equation $\fn(p;\phi(u))=0$. 

%
By hypothesis there exists at least one function $\lambda^*(p)$ satisfying (\ref{slideset}), and therefore there exists at least one slow critical manifold $M_0$ given by the restriction $\phi(u)=\la^*(p)$. Since $\phi$ is invertible in $\strip$ and $\la^*(p)\in\strip$ for every $p\in\sigma$ we conclude that $M_0$ is the graph $u(p)=\phi^{-1}\circ\la^*(p)$. This is homeomorphic to $\sigma$ as we can let $H:\sigma\rightarrow M_0$ be the bijective function $H(0,v)=(\phi^{-1}\circ\la^*(0,v),v)$, for which $H(\sigma)=M_0$.
The function $H$ is invertible and its order of differentiability is the same as that of $\phi$.

Substituting $\phi(u)=\la^*(p)$ into \eqref{t1rp}, the reduced problem on $x_1=0$ becomes
\begin{equation}\label{t1rpr}
\begin{array}{rl}
\dot p_i=&\dfrac{1+\la^*(p)}{2}f_i^+(0)+\dfrac{1-\la^*(p)}{2}f_i^+(0)+ G_i(0;\la^*(p))=f_i(p;\la^*(p)),
\end{array}
\end{equation}
for $i=2,3,...,n$. 
Now let $\ov p=(0,\ov v)$, so if $t\mapsto x_t(\ov p)=(0,v(t,\ov p))$ is the solution of the nonlinear sliding mode \eqref{t1f} such that $x_0(\ov p)=\ov p\in\sigma$, then the solution $t\mapsto X_t(H(\ov p))$ of the reduced problem \eqref{t1rp} on the slow manifold such that $X_0(H(\ov p))=H(\ov p)$ is given by 
\[
X_t(H(\ov p))=(\phi^{-1}\circ\la^*(v(t,\ov v)),v(t,\ov v))=H(x_t(\ov p)).
\]
The flows of the regularized reduced (slow manifold) system and the discontinuous sliding system are therefore $C^r$(topologically)--conjugated.

It remains to show the persistence of the slow-fast dynamics for $\delta>0$. By rescaling time in (\ref{t1sp}) by $t=\de \tau$ and taking $\de\rightarrow0$, we obtain the so-called layer problem
\begin{equation}\label{t1lp}
\begin{array}{rl}
u'=&\dfrac{1+\phi(u)}{2}f_1^+(p)+\dfrac{1-\phi(u)}{2}f_1^+(p)+ G_1(p;\phi(u))=\fn(p;\phi(u)),\\
p_i'=&0,\qquad i=2,3,...,n.
\end{array}
\end{equation}
which prescribes dynamics on the fast timescale $\tau$ external to the slow manifolds. The slow manifold $M_0$ is a manifold of critical points of the layer problem, which is normally hyperbolic if $(\p \fn/\p \la)(p;\la^*(p))\neq0$. The existence of slow manifolds $\delta$--close to the slow critical manifold, with dynamics $\delta$--close to the reduced problem (\ref{t1sp}), then follows by Fenichel's theorem \cite{fenichel}.
\end{proof}

%%%%%%%%%%%%%%%%%%%%%%%%%%%%%%%%%%%%%%%%%%
%%%%%%%%%%%%%%%%%%%%%%%%%%%%%%%%%%%%%%%%%%
\section{Pinching}\label{sec:pinch}

Pinching, introduced in \cite{simic} and developed further in \cite{DM}, can be thought of as an inverse to regularization, providing a method of deriving a discontinuous system as an approximation to a continuous system. 
A region of state space is chosen, say some $|h|\le\e$ for $\e>0$, to be collapsed down to a manifold $\Sigma$ by means of a discontinuous transformation, resulting in a system of the form (\ref{s1}). 

In considering nonlinear switching systems we are able to put the notion of pinching on a more rigorous footing. To do so we must distinguish between {\it intrinsic pinching}, where the pinching parameter $\e$ is a small parameter of the original continuous system, and {\it extrinsic pinching} where the original problem is $\e$--independent. Before venturing into the technicalities, let us illustrate them with an example. 

\begin{example}\label{eg:hills}
Take a system 
\begin{equation}\label{hills}
(\dot x_1,\dot x_2)=\bb{-x_1,2\op H(x_1/\alpha;b)-1}\;,\qquad \op H(u;b)=\frac{u^b}{1+u^b}\;.
\end{equation}
The Hill function $\op H$ is a sigmoid graph with a switch about $h=x_1=0$, and is a function prevalent in biological applications (starting with \cite{Hill}). There is an invariant manifold along $x_1=0$ with dynamics $(\dot x_1,\dot x_2)=(0,-1)$. 

Let $b\gg1$ be fixed. We shall take discontinuous approximations of this system. First, assuming $\alpha$ and $b$ are constants, let us make an {\it extrinsic} pinching with respect to a small parameter $\e$ by transforming to a coordinate $\tilde x_1=h-\e{\rm sign}(h)$, creating a discontinuous system
\begin{equation}\label{hilltil}
(\dot{\tilde x}_1,\dot x_2)
%=\bb{-x_1,\;2\frac{x_1^b}{\alpha^b+x_1^b}-1}
=\bb{-\tilde x_1\mp\e,\;2\op H\bb{\frac{\tilde x_1\pm\e}\alpha;b}-1}
%=\bb{-\tilde x_1-\e{\rm sign}(\tilde x_1),\;2\frac{(\tilde x_1+\e{\rm sign}(\tilde x_1))^b}{\alpha^b+(\tilde x_1+\e{\rm sign}(\tilde x_1))^b}-1}
%=\bb{-\tilde x_1-\e{\rm sign}(\tilde x_1),\;2\frac{(\e{\rm sign}(\tilde x_1))^b}{\alpha^b+(\e{\rm sign}(\tilde x_1))^b}-1+\ord{\tilde x_1}}
%=\bb{-\tilde x_1-\e{\rm sign}(\tilde x_1),\;2\frac{(\pm\e)^b}{\alpha^b+(\pm\e)^b}-1+\ord{\tilde x_1}}
%=\bb{-\tilde x_1-\e{\rm sign}(\tilde x_1),\;2\frac{\e^b}{(\pm \alpha)^b+\e^b}-1+\ord{\tilde x_1}}
%=\bb{-\tilde x_1-\e{\rm sign}(\tilde x_1),\;2\op H\bb{\pm\frac{\e}\alpha,b}-1+\ord{\tilde x_1}}
=\bb{-\tilde x_1\mp\e,\;2c_\pm-1+\ord{\tilde x_1}}
\end{equation}
where $c_\pm=\op H\bb{\pm\frac\e \alpha;b}$, with (\ref{hilltil}) taking the upper signs for $\tilde x_1>0$ and lower signs for $\tilde x_1<0$. If we fix $\alpha$ and pinch with respect to a small parameter $\e$ that is extrinsic to the smooth system (\ref{hills}), then expanding for small $\e/\alpha$ gives $c_\pm=\ord{\e/\alpha}$ and we can neglect it for small enough $\e$, giving the system in Figure \ref{fig:pinche}. Solving (\ref{slideset}) and (\ref{s4}) we obtain $\lambda^*=0$ and a sliding vector field $\dot p=(0,-1)+G_\e$ on $\tilde x_1=0$, which is equivalent to the dynamics on the invariant manifold $x_1=0$ of (\ref{hills}) with $G_\e\equiv0$. 
\begin{figure}[h!]\centering\includegraphics[width=0.5\textwidth]{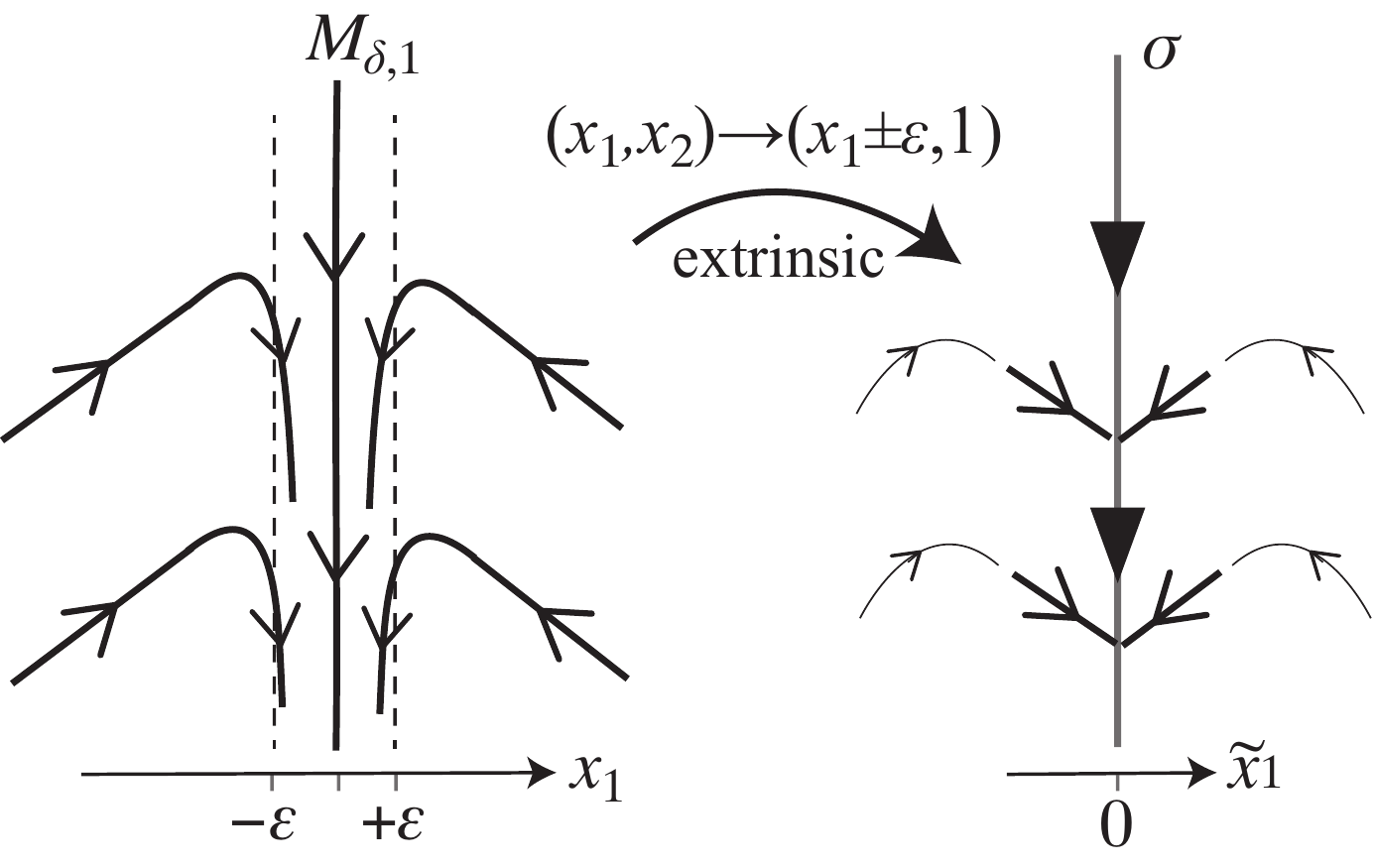}\vspace{-0.3cm}
\caption{\sf\footnotesize Differentiable systems with an invariant manifold $x_1=0$ (left), which we pinch by removing the region $|x_1|\le\e$, with $\e$ a small parameter extrinsic to (i.e. not appearing in) the smooth system. }\label{fig:pinche}
\end{figure}

Although the sliding mode captures the correction dynamics at $\tilde x_1=0$, the approximation outside is valid only for very small $\tilde x_1$ because is does not capture the turning around of the flow (the thin curves in the right of Figure \ref{fig:pinche}). To capture these we must use the exact expression in (\ref{hilltil}), so this approximation is quite weak. 

We can do something more powerful by pinching with respect to a parameter that is intrinsic to the system (\ref{hills}). If we set $\e=\alpha\sqrt2$ as an intrinsic pinching parameter, then expanding $\op H\bb{\pm\frac\e \alpha;b}$ for small $\alpha/\e$ gives $c_\pm=1+\ord{(\alpha/\e)^b}$, and we have the simple piecewise linear approximation $(-\tilde x_1\mp\e,1)$ for the righthand side of (\ref{hilltil}), as shown in the bottom row of Figure \ref{fig:pinchi}. The arrangement of the vector fields in the bottom right figure would give a linear sliding mode $\dot p=(0,1)$, which would be an incorrect representation of the dynamics of (\ref{hills}). Instead we need to find the nonlinear sliding mode, solving (\ref{slideset}) and (\ref{s4}) we obtain $\lambda^*=0$ and a sliding vector field $\dot p=(0,1)+G_\e$ on $\tilde x_1=0$, which is equivalent to dynamics on the invariant manifold $x_1=0$ in (\ref{hills}) if we set $G_\e=(0,-2)$, correctly capturing the dynamics of the smooth system. 

\begin{figure}[h!]\centering\includegraphics[width=0.5\textwidth]{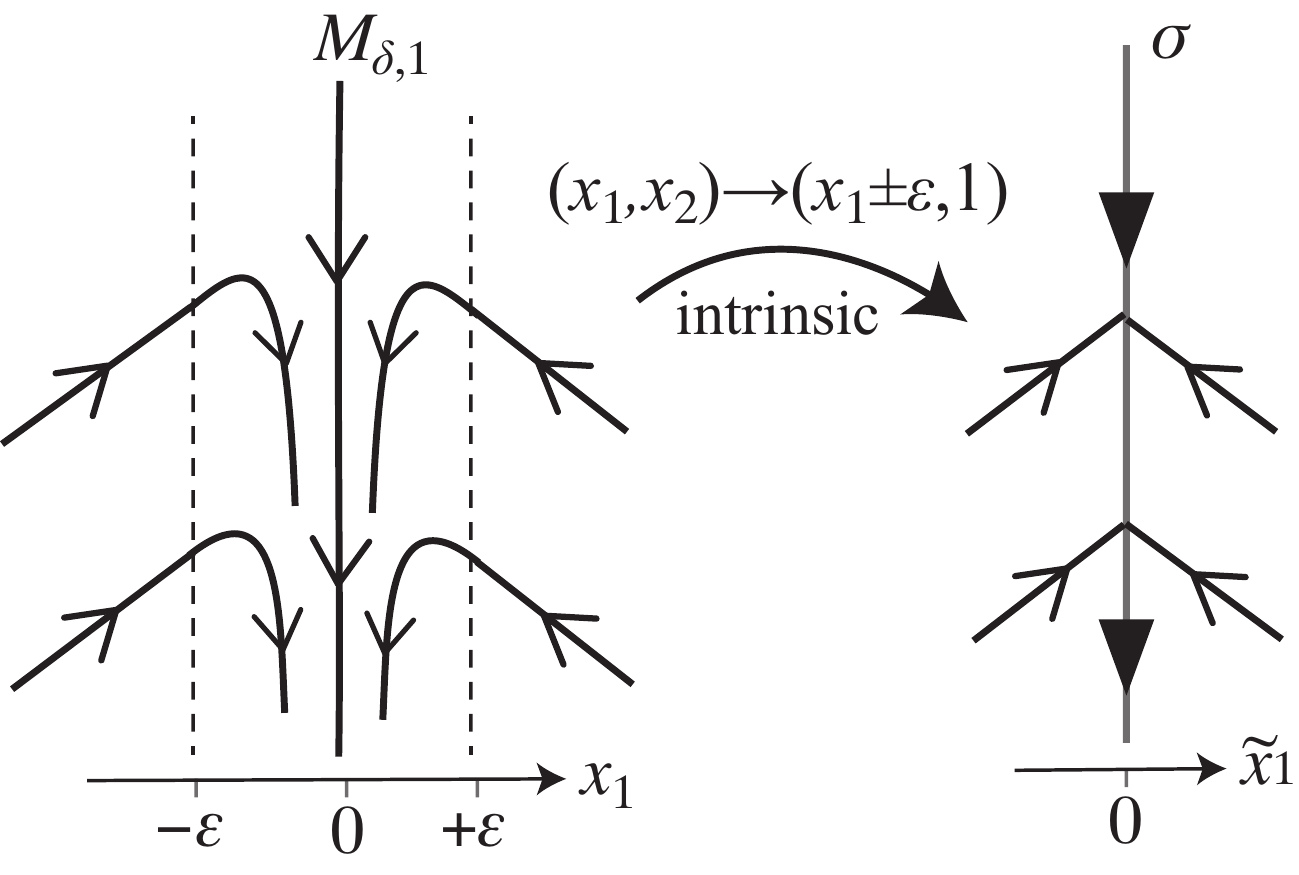}\vspace{-0.3cm}
\caption{\sf\footnotesize Starting from the same smooth system (left), we pinch by removing the region $|x_1|\le\e$, with $\e=\alpha\sqrt2$ and hence intrinsic to the smooth system. }\label{fig:pinchi}
\end{figure}

We say in these cases that $G_\e=(0,0)$ and $G_\e=(0,-2)$ {\it complete} the extrinsic and intrinsic systems, respectively. Below we generalize these ideas. 

%Intrisic pinching gives even more freedom in this case, for instance if we include behaviour in $x_1<0$ we find another invariant manifold at $x_1=-a$. Letting $h=x_1$ and pinching about $x_1=h=0$ by setting $\tilde x_1=x_1-\e{\rm sign}(x_1)$with $\e=\sqrt2a$, as $a\rightarrow0$ we have 
%\begin{equation}
%(\dot{\tilde x}_1,\dot x_2)
%%=\bb{1-2\op H(x_1/a,b),\;1}\;,\qquad \op H(u,b)=\frac{x_1^b}{a^b+x_1^b}
%%=\bb{1-2\frac{(\tilde x_1\pm\e)^b}{a^b+(\tilde x_1\pm\e)^b},\;1}\;,
%=\bb{1-2\frac{(\pm\e)^b}{a^b+(\pm\e)^b},\;1}+\ord{\tilde x_1}\;.
%\end{equation}
%
%%
%\begin{figure}[h!]\centering\includegraphics[width=0.6\textwidth]{pinch2}\vspace{-0.3cm}
%\caption{\sf\footnotesize Instead we pinch phase space by removing the region $|x_1|\le\e$ with $\e=a\sqrt2$. }\label{fig:pinch2}
%\end{figure}
%
%??What are the appropriate G's according to our theory??

\end{example}

\subsection{Extrinsic pinching}

Consider the dynamical system
\begin{equation}\label{cs}
\dot x=F(x),
\end{equation}
%\begin{equation}\label{cs}
%(\dot x,\dot{\y})=\big(F_1(x,\y),F_\y(x,\y)\big).
%\end{equation}
where $F$ is a $C^1$ function. Assume that the manifold $\Sigma=\{x\in\op D: h(x)=0\}$ is invariant under the flow, that is $F(p)\cdot\nabla h(p)=0$ for every $p\in \Sigma$. 

For small $\e>0$ consider the discontinuous system
\begin{equation}\label{eps}
\dot x=\left\{\begin{array}{l}
F(x+\e\nabla h(x))\quad \textrm{if}\quad h(x)>0,\\
F(x-\e\nabla h(x))\quad \textrm{if}\quad h(x)<0,
\end{array}\right.
\end{equation}
%\begin{equation}\label{ncps}
%(\dot x,\dot{\y})=\left\{\begin{array}{l}
%F^+(x+\e,\y)\quad \textrm{if}\quad x>0,\\
%F^+(x-\e,\y)\quad \textrm{if}\quad x<0.
%\end{array}\right.
%\end{equation}
in which the manifold $\Sigma$ becomes a switching manifold between some $F^+(x;\e)=F(x+\e\nabla h(x))$ and some $F^-(x;\e)=F(x-\e\nabla h(x))$. We call (\ref{eps}) the {\it incomplete} extrinsically pinched system, ``incomplete'' because like (\ref{s1}) it is not yet well defined on $\Sigma$.

We then ask whether it is possible to complete the pinched system \eqref{eps} using a nonlinear combination (\ref{s2}), such that its nonlinear sliding modes (\ref{s4}) agree with the dynamics of \eqref{cs} on the invariant manifold $\Sigma$. When this is possible for some family of functions $G^{\e}$ ($G^\e$ being the nonlinear part for (\ref{s2}) now dependent on $\e$) we say that $G^{\e}$ {\it completes} the pinched system, and we call 
\begin{equation}\label{cps}
\begin{array}{l}
\dot x=f^{\e}(x;\la)=\dfrac{1+\la}{2}F(x+\e\nabla h(x))+\dfrac{1-\la}{2}F(x-\e\nabla h(x))+G^{\e}(x;\la),\\
\la\in\strip,\qquad h(x)G^{\e}(x;\la)=0\;,
\end{array}
\end{equation}
the {\it complete} extrinsically pinched system. % (or just {\it complete pinched system}). 
In order to obtain $\lim_{\e\to 0}f^{\e}(x;\la)=F(x)$ we assume that the function $\e\mapsto G^{\e}(x;\la)$ is sufficiently differentiable and that $G^0(x;\la)=0$. 

Completing the pinched system in this way is possible provided that \eqref{cs} restricted to the manifold $\Sigma$ is structurally stable (see \cite{P}). The function $G$ that completes the pinched system is not unique.

\begin{theorem}\label{t3}
For $\e>0$ sufficiently small in (\ref{cps}), if there exists a continuous family $\la_{\e}^*(p)\in\strip$ of $C^1$ functions such that $\fn(p;\la_{\e}^*(p))=0$ by (\ref{slideset}) for every $p\in\Sigma$, then the nonlinear sliding mode by (\ref{s4}) satisfies
\[
\dot p=f^{ns}(p)=F(p)+r(p;\e)\qquad\textrm{on}\quad\Sigma^{ns},
\] 
where $r(p;\e)$ is a continuous function that is $C^1$ in the first variable, and where $r(p;\e)\to 0$ as $\e\to 0$. Moreover if we assume that \eqref{cs} restricted to the invariant manifold $\Sigma$ is structurally stable, then it is topologically equivalent to the nonlinear sliding dynamics. 
\end{theorem}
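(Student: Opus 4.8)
The plan is to substitute the assumed sliding multiplier $\la_\e^*(p)$ directly into the sliding vector field $f^{ns}(p)=f^\e(p;\la_\e^*(p))$ and expand in $\e$, isolating $F(p)$ as the leading term. Writing the right-hand side of \eqref{cps} in symmetric form,
\[
f^{ns}(p)=\frac{F(p+\e\nabla h(p))+F(p-\e\nabla h(p))}{2}+\la_\e^*(p)\,\frac{F(p+\e\nabla h(p))-F(p-\e\nabla h(p))}{2}+G^\e(p;\la_\e^*(p)),
\]
I would set $r(p;\e):=f^{ns}(p)-F(p)$ and read off its three contributions: the symmetric average equals $F(p)+o(1)$ by continuity of $F$; the antisymmetric difference is $O(\e)$ and is multiplied by $\la_\e^*\in\strip$, hence remains bounded and small; and $G^\e(p;\la_\e^*)\to G^0(p;\la_\e^*)=0$ by the assumed differentiability of $\e\mapsto G^\e$ together with $G^0\equiv0$. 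Summing these gives $f^{ns}(p)=F(p)+r(p;\e)$ with $r(p;\e)\to0$ as $\e\to0$.

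The regularity of $r$ is then immediate from its construction: $F$ is $C^1$, $\nabla h$ is smooth, $G^\e$ is sufficiently differentiable, and $\la_\e^*$ is by hypothesis continuous in $\e$ and $C^1$ in $p$, so $f^\e(p;\la_\e^*(p))$ is jointly continuous and, for each fixed $\e$, $C^1$ in $p$ by the chain rule; subtracting the $C^1$ field $F$ preserves this. This establishes the first assertion.

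For the topological equivalence I would first note that $F|_\Sigma$ and $f^{ns}$ are genuine vector fields on $\Sigma$: both are tangent to $\Sigma$, since $F(p)\cdot\nabla h(p)=0$ by invariance and $f^{ns}(p)\cdot\nabla h(p)=\fn(p;\la_\e^*(p))=0$ by the defining relation \eqref{slideset}, so $r$ is tangent as well. Structural stability of $F|_\Sigma$ provides a $C^1$ neighborhood within which every field is topologically equivalent to $F|_\Sigma$; it then suffices to show that $f^{ns}=F|_\Sigma+r(\cdot;\e)$ lies in this neighborhood for small $\e$, i.e. that $r(\cdot;\e)\to0$ not merely in $C^0$ but in $C^1$.

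The $C^1$ estimate is the crux of the argument and the step I expect to be most delicate. Differentiating $f^{ns}=f^\e(p;\la_\e^*(p))$ tangentially gives $\p_{p_j}f^{ns}=\p_{p_j}f^\e+\p_\la f^\e\,\p_{p_j}\la_\e^*$; the first term converges to $\p_{p_j}F$ because the $\la$-dependence cancels in the convex weights while $\p_{p_j}G^\e\to0$, so the entire difficulty is the cross term $\p_\la f^\e\,\p_{p_j}\la_\e^*$. Here $\p_\la f^\e=\tfrac12[F(p+\e\nabla h)-F(p-\e\nabla h)]+\p_\la G^\e=O(\e)$, whereas $\p_{p_j}\la_\e^*$ need not be small, since $\la_\e^*$ is undefined at $\e=0$, where $\fn\equiv0$ on $\Sigma$. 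The way through is to differentiate the identity $\fn(p;\la_\e^*(p))=0$, obtaining $\p_{p_j}\la_\e^*=-\p_{p_j}\fn/\p_\la\fn$; in the coordinates $h=x_1$ one finds $\fn=\e[\la\,\p_1F_1+G_1^{(1)}]+o(\e)$ with $G^{(1)}:=\p_\e G^\e|_{\e=0}$, so both $\p_{p_j}\fn$ and $\p_\la\fn$ are $O(\e)$ and their ratio stays bounded precisely when the leading coefficient is nonzero, i.e. under the nondegeneracy $\p_\la\fn\neq0$ inherited from the normal-hyperbolicity hypothesis of Theorem \ref{t2}. Since then the factor $\p_\la f^\e=O(\e)$ is multiplied by the bounded factor $\p_{p_j}\la_\e^*=O(1)$, the cross term is $O(\e)$ and $r(\cdot;\e)\to0$ in $C^1$. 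Structural stability now yields topological equivalence of $f^{ns}$ and $F|_\Sigma$ for all sufficiently small $\e$.
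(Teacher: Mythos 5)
Your decomposition of $f^{ns}(p)=f^{\e}(p;\la_{\e}^*(p))$ into $F(p)+r(p;\e)$ is exactly the paper's argument: the authors substitute $\la_{\e}^*$ into \eqref{cps}, use that the convex weights sum to one so the zeroth-order term is $F(p)$, and conclude $r(p;\e)\to0$ from continuity of the family $\la_{\e}^*$ together with $G^{0}\equiv0$. Where you genuinely diverge is the final step. The paper's own proof is two lines: ``since $\dot p=F(p)$ is structurally stable it must therefore be topologically equivalent to $\dot p=f^{ns}(p)$,'' with no discussion of the topology in which $f^{ns}$ approximates $F|_\Sigma$. You correctly observe that structural stability in the sense of Peixoto \cite{P} only guarantees equivalence for perturbations small in $C^1$, and you try to supply the missing $C^1$ estimate by differentiating the implicit relation $\fn(p;\la_{\e}^*(p))=0$ to bound $\p_{p_j}\la_{\e}^*$. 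That is a real strengthening of the written proof, but be aware that your bound rests on a leading-order nondegeneracy $\p\fn/\p\la\neq0$ which is \emph{not} among the hypotheses of Theorem \ref{t3}; you import it from Theorem \ref{t2}, which concerns regularization, not pinching. Under the stated hypotheses alone (mere existence of a continuous family of $C^1$ functions $\la_{\e}^*$), nothing prevents $\p_{p_j}\la_{\e}^*$ from growing as $\e\to0$ where that leading coefficient degenerates, so the $C^1$-smallness of $r$ does not follow without your extra assumption. In short: your first half matches the paper; your second half is more careful than the paper's but proves the equivalence under a strictly stronger hypothesis, which is worth stating explicitly rather than attributing to Theorem \ref{t2}.
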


\begin{proof}
Direct application of (\ref{s4}) to (\ref{cps}) gives
\[
\begin{array}{rl}
f^{ns}(p)=&\!\!\!\dfrac{1+\la_{\e}^*(p)}{2}F(p+\e\nabla h(p))+\dfrac{1-\la_{\e}^*(p)}{2}F(p-\e\nabla h(p))+G^{\e}(p;\la_{\e}^*(p))\\
=&\!\!\!F(p)+r(p;\e),
\end{array}
\]
the second line following because $\la_{\e}^*(p)$ is a continuous family of functions.
Since the system $\dot p =F(p)$ is structurally stable it must therefore be topologically equivalent to $\dot p =f^{ns}(p)$.
\end{proof}

We shall assume now that the function $F$ is of class $C^{k+1}$, and that
\begin{equation}
G^{\e}(p;\la)=\sum_{i=1}^k\e^i \gamma_i(p;\la)+\CO(\e^{k+1})
\end{equation}
for some functions $\gamma_i$. 
Similar to (\ref{K}) we define the $\e$--family of functions $\fn^{\e}(p;\la)=f^{\e}(p;\la)\cdot\nabla h(p)$, and expand
\begin{equation}\label{Ksum}
\fn^{\e}(p;\la)=\sum_{i=r}^k \e^i \kappa_i(p;\la)/i!
\end{equation}
in terms of functions $\kappa_i$ given by
\begin{equation}
\begin{array}{rl}
\kappa_i(p;\la)=&i!\dfrac{\p^i}{\p\e^i}\fn^{\e}(p;\la)\Big|_{\e=0}\vspace{0.2cm}\\
=&\la^{\frac{1-(-1)^i}{2}}\left[(\nabla h(p)\cdot\nabla)^i F(p)\right]\cdot\nabla h(p)+\gamma_i(p;\la)\cdot\nabla h(p),
\end{array}
\end{equation}
for $i=1,2,\ldots,k$. Here $(\nabla h(p)\cdot\nabla)^i F(p)\in\op D$ denotes the scalar derivative $\displaystyle\nabla h\cdot\nabla=\sum_{j=1}^n\frac{\partial h}{\partial x_j}\frac{\partial\;}{\partial x_j}$ applied $i$ times to $F$ and evaluated at $p$. 
%the vector obtained by applying the $i$--linear map $D^iF(p)$ at the ``product'' of $i$ vector 
%\[
%(\nabla h(p))^i=(\nabla h(p),\nabla h(p),\ldots,\nabla h(p))\in\underbrace{\R^n\times\R^n\times\cdots\times\R^n}_{i\times}.
%\]
%However the expression of $\kappa_i(p;\la)$ is easily obtained by doing the indicated derivative.

\begin{theorem}\label{t4}
For $r\leq k$ assume that $\kappa_i=0$ for $i=1,2,\ldots,r-1$ and $\kappa_r\neq0$. Suppose that there exists $\ell(p)\in(-1,1)$ such that $\kappa_r(p;\ell(p))=0$ and $(\p \kappa_r/\p \la)(p;\ell(p))\neq0$ for every $p\in\Sigma$. Then for $\e>0$ sufficiently small there exists a continuous family $\la_{\e}^*(p)\in\strip$ of $C^1$ functions such that $\fn^{\e}(p;\la_{\e}^*(p))=0$ for every $p\in\Sigma$. Moreover if we assume that the system \eqref{cs} restricted to the invariant manifold $\Sigma$ is structurally stable, then on $\Sigma$ it is topologically equivalent to the nonlinear sliding mode defined by (\ref{s4}).
\end{theorem}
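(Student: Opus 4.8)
The plan is to produce the zero $\la_\e^*(p)$ by a single application of the implicit function theorem, after dividing out the leading factor $\e^r$ from the expansion (\ref{Ksum}), and then to obtain the topological equivalence for free by invoking Theorem \ref{t3}, whose hypothesis is precisely the existence of such a continuous family of $C^1$ functions.

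First I would use the assumption $\kappa_i\equiv0$ for $i=1,\ldots,r-1$ to factor the normal component. Since $F\in C^{k+1}$ and $r\leq k$, the map $(\la,\e)\mapsto\fn^\e(p;\la)$ is smooth enough in $\e$ that its first $r-1$ derivatives vanish at $\e=0$, so by Taylor's theorem the quotient $\fn^\e(p;\la)/\e^r$ extends across $\e=0$ to a function
\begin{equation*}
\mathcal{G}(p;\la;\e):=\frac{1}{\e^r}\fn^\e(p;\la)\quad(\e\neq0),\qquad \mathcal{G}(p;\la;0):=\frac{\kappa_r(p;\la)}{r!},
\end{equation*}
which is $C^1$ jointly in $(p,\la,\e)$ near $\e=0$. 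For $\e>0$ the zeros of $\fn^\e$ in $\la$ coincide with those of $\mathcal{G}$, so it suffices to solve $\mathcal{G}(p;\la;\e)=0$.

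Next I would apply the implicit function theorem to $\mathcal{G}$, in the variable $\la$, based at the point $(p,\ell(p),0)$. The hypotheses give $\mathcal{G}(p;\ell(p);0)=\kappa_r(p;\ell(p))/r!=0$ and $(\p\mathcal{G}/\p\la)(p;\ell(p);0)=(\p\kappa_r/\p\la)(p;\ell(p))/r!\neq0$, so there is a unique $C^1$ map $(p,\e)\mapsto\la_\e^*(p)$, defined for $\e$ small, with $\la_0^*(p)=\ell(p)$ and $\mathcal{G}(p;\la_\e^*(p);\e)=0$, hence $\fn^\e(p;\la_\e^*(p))=0$. Because the solution comes from the joint equation, $\la_\e^*$ is automatically $C^1$ in $p$ and continuous in $\e$; in particular $\ell(p)=\la_0^*(p)$ inherits $C^1$ regularity, which is the only point at which the regularity of the given $\ell$ matters. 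Since $\ell(p)$ lies in the open interval $(-1,1)$ and $\la_\e^*(p)\to\ell(p)$ as $\e\to0$, the value $\la_\e^*(p)$ stays in $\strip$ for small $\e$. With this family in hand, Theorem \ref{t3} applies verbatim, giving $f^{ns}(p)=F(p)+r(p;\e)$ on $\Sigma^{ns}$ and hence, under the structural stability assumption on (\ref{cs}) restricted to $\Sigma$, the claimed topological equivalence with the nonlinear sliding mode (\ref{s4}).

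The step I expect to be the main obstacle is uniformity in $p$. The implicit function theorem is intrinsically local, so a priori the threshold $\e_0(p)$ below which $\la_\e^*(p)$ exists and remains inside $\strip$ could degenerate as $p$ ranges over a non-compact $\Sigma$, whereas the statement asks for a single ``$\e>0$ sufficiently small''. To secure a $p$-independent threshold I would either restrict to a compact portion of $\Sigma$ and extract a finite subcover, or else establish uniform estimates: a uniform lower bound on $|\p\kappa_r/\p\la|$ at $\la=\ell(p)$, a uniform lower bound on the distance from $\ell(p)$ to the endpoints $\pm1$, and uniform $C^1$ bounds on $\mathcal{G}$. Any of these feeds the quantitative form of the implicit function theorem and yields the uniform $\e_0$, after which the conjugacy of flows supplied by Theorem \ref{t3} completes the argument.
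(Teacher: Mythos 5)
Your argument is correct and follows essentially the same route as the paper: factor out $\e^r$ using the vanishing of $\kappa_1,\ldots,\kappa_{r-1}$, apply the implicit function theorem to $\fn^{\e}(p;\la)/\e^r$ at $(\la,\e)=(\ell(p),0)$, and conclude via Theorem \ref{t3}. Your additional remarks on the continuous extension across $\e=0$ and on uniformity of the threshold $\e_0$ in $p$ make explicit two points the paper leaves implicit, but they do not change the structure of the proof.
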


\begin{proof}
Assuming that $\kappa_i=0$ for $i=1,2,\ldots,r-1$ we write using (\ref{Ksum})
\[
\fn^{\e}(p;\la)=\e^r\dfrac{\kappa_r(p;\la)}{r!}+\CO(\e^{r+1}).
\]
Since $\kappa_r(p;\ell(p))=0$ and $(\p \kappa_r/\p \la)(p;\ell(p))\neq0$, applying the implicit function theorem for the function $\fn^{\e}(p;\la)/\e^r$ we obtain, for $\e>0$ sufficiently small, the existence of a differentiable family $\la_{\e}^*(p)\in\strip$ of $C^1$ functions such that $\fn^{\e}(p;\la_{\e}^*(p))=0$ for every $p\in\Sigma$.
The result follows by applying Theorem \ref{t3}.
\end{proof}

In some cases it is sufficient to take $G^{\e}\equiv0$ (i.e. a linear combination) to complete the pinched system \eqref{eps}. The following corollary concerns cases, as in Example \ref{eg:hills}, for which $G^{\e}$ cannot be zero everywhere.

\begin{corollary}\label{c2}
Assume in (\ref{eps}) that $F$ is a $C^3$ function. The following statements hold:
\begin{itemize}
\item[$(a)$] If $[\nabla h(p)\cdot\nabla F(p)]\cdot\nabla h(p)\neq0$ then $G^{\e}\equiv0$ completes the pinched system (\ref{cps}).

\item[$(b)$] If $[(\nabla h(p)\cdot\nabla) F(p)]\cdot\nabla h(p)=0$ and $[(\nabla h(p)\cdot\nabla)^2F(p)]\cdot\nabla h(p)\neq0$ then the function $G^{\e}\equiv0$ does not complete the pinched system. In this case $G^{\e}=\e^2(\la^2-1)C(p)%[(\nabla h(p)\cdot\nabla)^2 F(p)]\cdot\nabla h(p)
$ with $C(p)\neq[(\nabla h(p)\cdot\nabla)^2 F(p)]\cdot\nabla h(p)$ completes the system.
\end{itemize}
\end{corollary}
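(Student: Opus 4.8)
The plan is to reduce both parts to Theorem \ref{t4} by computing the first two expansion coefficients $\kappa_1,\kappa_2$ from (\ref{Ksum}) for the relevant choices of $G^\e$ and then reading off when a non-degenerate root $\ell(p)\in(-1,1)$ of the leading coefficient exists. Throughout I would abbreviate $A_1:=[\nabla h\cdot\nabla F]\cdot\nabla h$ and $A_2:=[(\nabla h\cdot\nabla)^2F]\cdot\nabla h$, the two scalars appearing in the hypotheses; since $F$ is $C^3$, (\ref{Ksum}) is valid through $\kappa_2$ with a $\CO(\e^3)$ remainder, and the odd/even factor $\la^{(1-(-1)^i)/2}$ makes $\kappa_1$ linear in $\la$ and $\kappa_2$ independent of $\la$ when $G^\e\equiv0$.

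For part $(a)$ I set $G^\e\equiv0$, so every $\gamma_i$ vanishes and $\kappa_1(p;\la)=\la\,A_1$. Since $A_1\neq0$, this is not identically zero, so the leading index is $r=1$; the value $\ell(p)=0$ solves $\kappa_1(p;\ell)=0$ and $(\p\kappa_1/\p\la)(p;0)=A_1\neq0$ uniformly in $p$. Theorem \ref{t4} with $r=1$ then supplies the $C^1$ family $\la^*_\e(p)\in\strip$, and Theorem \ref{t3} gives a sliding field that is an $\e$-small perturbation of $F|_\Sigma$, topologically equivalent to it under the standing structural-stability assumption; hence $G^\e\equiv0$ completes the system.

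For part $(b)$ I would first show that $G^\e\equiv0$ fails. Now $A_1=0$, so $\kappa_1\equiv0$ and the leading coefficient is $\kappa_2(p;\la)=A_2$, a nonzero constant independent of $\la$. Then $\fn^\e(p;\la)=\tfrac{\e^2}{2}A_2+\CO(\e^3)$ keeps a fixed sign for all $\la\in\strip$ once $\e$ is small, so (\ref{slideset}) has no solution, $\Sigma\subset\Sigma^{nc}$, and no sliding mode exists, so $G^\e\equiv0$ cannot complete the system. Next I take $G^\e=\e^2(\la^2-1)C(p)$, giving $\gamma_1=0$ (so $\kappa_1\equiv0$ and $r=2$ persist) and $\gamma_2=(\la^2-1)C$, whence $\kappa_2(p;\la)=A_2+(\la^2-1)B$ with $B:=C(p)\cdot\nabla h(p)$. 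Solving $\kappa_2(p;\ell)=0$ yields $\ell(p)^2=1-A_2/B$; choosing $C$ so that $B$ has the same sign as $A_2$ with $|B|>|A_2|$ places $\ell(p)\in(0,1)\subset(-1,1)$, while the strict inequality $B\neq A_2$ (which is the content of the stated hypothesis $C(p)\neq[(\nabla h\cdot\nabla)^2F]\cdot\nabla h$) forces $\ell(p)\neq0$, so that $(\p\kappa_2/\p\la)(p;\ell)=2\ell(p)B\neq0$. Theorem \ref{t4} with $r=2$ then produces $\la^*_\e$ and, via Theorem \ref{t3}, completion.

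The main obstacle will be the uniformity of the root $\ell(p)$ over $p\in\Sigma$: Theorem \ref{t4} demands a single continuous family $\ell(p)\in(-1,1)$ with $(\p\kappa_2/\p\la)(p;\ell(p))\neq0$ at every $p$, so I must verify that the chosen $C(p)$ keeps $\ell(p)^2=1-A_2(p)/B(p)$ bounded strictly inside $[0,1)$ and away from $0$ as $p$ ranges over $\Sigma$, and that $\ell(p)$ inherits the regularity of $A_2$ and $C$. By contrast, part $(a)$ has no such difficulty because there $(\p\kappa_1/\p\la)=A_1$ is constant in $\la$ and nonzero by hypothesis. The remaining checks—that the $\CO(\e^3)$ remainder does not disturb the implicit-function argument already packaged in Theorem \ref{t4}, and that $G^0=0$ so $\lim_{\e\to0}f^\e=F$—are routine.
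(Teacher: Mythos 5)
Your strategy---expand $\fn^\e$ via (\ref{Ksum}), locate a simple zero $\ell(p)$ of the leading coefficient, and feed it to Theorem~\ref{t4} and then Theorem~\ref{t3}---is exactly the paper's. Part $(a)$ and the negative half of part $(b)$ (with $G^\e\equiv0$ one gets $\kappa_1\equiv0$ and $\kappa_2=A_2\neq0$ independent of $\la$, so $\fn^\e$ has a fixed sign on $\strip$ for small $\e$ and no bounded root exists) coincide with the printed proof, which simply says ``no bounded family of solutions'' where you supply the reason.

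Where you diverge is the positive half of $(b)$, and there your version is the more careful one. The paper asserts that with $G^{\e}=\e^2(\la^2-1)C(p)$ one gets $\fn^{\e}(p;\la)=\e^2C(p)$ and that $\la^*_{\e}(p)=0$ is a root; neither claim survives your (correct) computation $\kappa_2(p;\la)=A_2+(\la^2-1)\,C(p)\cdot\nabla h(p)$, up to factor-of-$2$ bookkeeping between (\ref{Ksum}) and the Taylor expansion. In particular $\kappa_2$ is even in $\la$, so a root at $\ell=0$ automatically has $\p\kappa_2/\p\la=0$ there and the nondegeneracy hypothesis of Theorem~\ref{t4} fails---this is exactly the situation of Example~\ref{eg:s2}, where $\fn^\e=\e^2\la^2$. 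Your insistence on $\ell(p)\neq0$ is therefore a genuine requirement, not a refinement. The price is the one you flag at the end: the root $\ell(p)^2=1-A_2(p)/B(p)$ with $B=C\cdot\nabla h$ lies in $(-1,1)\setminus\{0\}$ only if $0<A_2(p)/B(p)<1$ uniformly on $\Sigma$, i.e.\ $B$ must have the same sign as $A_2$ and strictly exceed it in magnitude. The stated hypothesis $C(p)\neq[(\nabla h\cdot\nabla)^2F(p)]\cdot\nabla h(p)$ (itself a vector--scalar comparison, which you sensibly repair by passing to $B$) does not imply this: if $B$ has the opposite sign to $A_2$ then $\ell^2>1$ and there is no sliding at all. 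So what you actually prove is a corrected statement $(b)$ under a strengthened condition on $C$; you should state that condition explicitly as a hypothesis rather than leaving it inside ``choosing $C$ so that\dots''. With that amendment, and the uniformity check on $\ell(p)$ you already identify, the reduction to Theorems~\ref{t4} and~\ref{t3} closes as you describe.
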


\begin{proof}
Taking $G^{\e}\equiv0$ we have from above that
\[
\fn^{\e}(p;\la)=\e\la[(\nabla h(p)\cdot\nabla) F(p)]\cdot\nabla h(p)+\e^2\dfrac{1}{2}[(\nabla h(p)\cdot\nabla)^2 F(p)]\cdot\nabla h(p)+\CO(\e^3).
\]
If $[\nabla h(p)\cdot\nabla F(p)]\cdot\nabla h(p)\neq 0$ we can choose $\ell(p)=0$, thus $\kappa_1(p,0)=0$ and $(\p \kappa_1/\p \la)(p;\ell(p))=[\nabla h(p)\cdot\nabla F(p)]\cdot\nabla h(p)\neq0$. Hence applying Theorem \eqref{t4} we have statement $(a)$.

If instead $[\nabla h(p)\cdot\nabla F(p)]\cdot\nabla h(p)= 0$ and $[(\nabla h(p)\cdot\nabla)^2 F(p)]\cdot\nabla h(p)\neq0$, there is no bounded family of solutions $\la_{\e}^*(p)$ of the equation $\fn^{\e}(p;\la_{\e}^*(p))=0$ for $G^{\e}\equiv0$. 
Taking instead $G^{\e}=\e^2(\la^2-1)C(p)%[(\nabla h(p)\cdot\nabla)^2 F(p)]\cdot\nabla h(p)
$ such that $C(p)\neq[(\nabla h(p)\cdot\nabla)^2 F(p)]\cdot\nabla h(p)$ we have that
\[
\fn^{\e}(p;\la)=\e^2C(p).%\la^2[(\nabla h(p)\cdot\nabla)^2 F(p)]\cdot\nabla h(p).
\]
So $\la_{\e}^*(p)=0\in\strip$ is a family of solutions of $\fn^{\e}(p;\la_{\e}^*(p))=0$. Applying Theorem \ref{t4} we then have statement $(b)$.
\end{proof}

A simple example is given by $\dot x_1=-x_1$ with $(\dot x_2,..,\dot x_n)=q(x_2,...,x_n)$ where $q$ is any smooth function; this would give a complete pinched system with Filippov (i.e. $G^{\e}\equiv0$) sliding dynamics equivalent to the smooth system's invariant dynamics on $x_1=0$. Instead consider the following more interesting system. 

\begin{example}\label{eg:s2}
For $x_1\in\R$ and $\y=(x_2,x_3,...,x_n)\in\R^{n-1}$ consider the system
\begin{equation}
\dot x_1=x_1^2,\quad\dot{\y}=q(\y).
\end{equation}
Taking $h(x_1,\y)=x_1$ the manifold $\Sigma=\cc{x\in\op D:x_1=0}$ is invariant under the flow. The dynamics defined on $\Sigma$ is given by $\dot{\y}=q(\y)$, and the incomplete pinched system is given by
\begin{equation}\label{epseg}
\dot x_1=\left\{\begin{array}{l}(x_1+\e)^2\quad\textrm{if}\quad x_1>0\\(x_1-\e)^2\quad\textrm{if}\quad x_1<0\end{array}\right\},  \quad \dot{\y}=q(\y).
\end{equation}
Computing the function $\fn^{\e}(0,\y;\la)$ we obtain $\fn^{\e}(0,\y;\la)=G^{\e}(0,\y;\la)\cdot\nabla h+\e^2$. Clearly for $G^{\e}\equiv0$ (the linear/Filippov case) with $\e>0$ the equation $\fn^{\e}(0,\y;\la)=0$ has no solutions, instead \eqref{eps} has only crossing solutions, and this does not represent the dynamics of the smooth system (\ref{epseg}). Taking instead $G^{\e}(x_1,\y;\la)=(\e^2(\la^2-1),0,0,...)$ we find that, for $\e>0$ sufficiently small, $\la_{\e}^*(x_1,\y)=0\in\strip$ is a family of solutions of $\fn^{\e}(0,\y;\la)=0$, and produces a nonlinear sliding mode given from (\ref{s4}) by $\dot{\y}=f^{ns}(0,\y)=\bb{0,q(\y)}$. %, which is topologically equivalent to the system $\dot{\y}=q(0,\y)$, despite the system on $x_1=0$ not being structurally stable. 
\end{example}

In this example, therefore, we can complete the pinched system, but we cannot use Theorem \ref{t4} to prove equivalence between the pinched sliding dynamics and the original invariant dynamics on $\Sigma$, because the original continuous system, in particular the term $\dot x_1=x_1^2$, is structurally unstable. To handle such cases it is necessary to perturb the original system by a small quantity. It is then natural to pinch with respect to that small quantity, giving a pinching parameter that is intrinsic to the system.

\subsection{Intrinsic pinching}

Let $I$ and $U$ be open bounded subsets of $\R$ and $\R^{n-1}$, respectively. For $x_1\in I$ and $\y=(x_2,x_3,...,x_n)\in U$ consider the system
\begin{equation}\label{ics}
\dot x_1=F_1(x_1,\y;\mu),\quad\dot {\y}=\mu\,F_\y(x_1,\y;\mu).
\end{equation}
%\begin{equation}\label{cs}
%(\dot x,\dot{\y})=\big(F_1(x,\y),F_\y(x,\y)\big).
%\end{equation}
where $F=(F_1,F_\y)$ is a $C^1$ function and $\mu$ is a small parameter. We assume that for $\mu=0$ the graph $\Sigma=\{(0,\y):\,\y\in U\}$ is a critical invariant manifold of \eqref{ics}, that is $F_1(0,\y;0)=0$ for every $\y\in U$.

We also assume that, for $\mu>0$ sufficiently small, the graphs $\Sigma_{\e}^i=\{(m_{\e}^i(\y),\y):\, \y\in U\}$ for $i=1,2,\ldots,k,$ are invariant manifolds of \eqref{ics}, where $m_{\e}^i(\y)=\e\, m_i(\y)+\CO(\e^2)$ for some differentiable functions $m_i:\ov U\rightarrow \R$, such that the $\Sigma^i_\e$ are order $\e$-perturbations of $\Sigma$. We assume that $\mu=\CO(\e^r)$ where $r\ge1$, so that taking $\mu=\mu(\e)$ we have that $\mu(0)=0$. System \eqref{ics} induces dynamics on each $\Sigma_{\e}^i$, namely
\begin{equation}\label{is}
%\begin{array}{rl}
%\dot x=\e m_i'(\y)F_\y(\e m_i(\y),y;\mu(\e))
\dot{\y}=\mu(\e) F_\y(m_{\e}^i(\y),\y;\mu(\e))%\\
%=&F_\y(0,\y;0)+\e\left(\mu'(0)\dfrac{\p F_\y}{\p \mu}(0,\y;0)+m_i(\y)\dfrac{\p F_\y}{\p x_1}(0,\y;0)\right)+\CO(\e^2),
%\end{array}
\qquad{\rm on}\qquad x_1=m_{\e}^i(\y)\;.
\end{equation}

Now let $\man$ be a positive real number such that $\man>\max\{|m_i(\y)|:\,\y\in \ov U,\,i=1,2,\ldots,k\}$. For $\e>0$ sufficiently small we consider the following discontinuous system,
\begin{equation}\label{iips}
\begin{array}{l}
\dot x_1=\left\{\begin{array}{l}
F_1(x_1+\e\man,\y;\mu(\e))\quad \textrm{if}\quad x_1>0,\\
F_1(x_1-\e\man,\y;\mu(\e))\quad \textrm{if}\quad x_1<0,
\end{array}\right.\vspace{0.2cm}\\
\dot{\y}=\left\{\begin{array}{l}
\mu(\e)F_\y(x_1+\e\man,\y;\mu(\e))\quad \textrm{if}\quad x_1>0,\\
\mu(\e)F_\y(x_1-\e\man,\y;\mu(\e))\quad \textrm{if}\quad x_1<0.
\end{array}\right.
\end{array}
\end{equation}
%\begin{equation}\label{ncps}
%(\dot x_1,\dot{\y})=\left\{\begin{array}{l}
%F^+(x_1+\e,\y)\quad \textrm{if}\quad x_1>0,\\
%F^+(x_1-\e,\y)\quad \textrm{if}\quad x_1<0.
%\end{array}\right.
%\end{equation}
We call a {\it incomplete} intrinsically pinched system, where $\Sigma$ is now the switching manifold where the dynamics is not well defined. The discontinuous vector field $(F_1,\mu F_\y)$ on the righthand side of (\ref{iips}) will be denoted by $F(x_1,\y;\mu(\e))$.

As we did for extrinsic pinching, we must now attempt to complete the system. In this case we must ask whether the pinched system \eqref{iips} can be completed in the form \eqref{s2} such that there exist $k$ nonlinear sliding modes, each of which agrees with the dynamics of \eqref{is} for $i=1,2,\ldots,k$. When this is possible for some family of functions $G^{\e}$ we say that $G^{\e}$ {\it completes} the pinched system, and we call
\begin{equation}%\label{cpsn}
\begin{array}{rl}
\dot x_1=&\!\!\!f^{\e}(x_1,\y;\la)\\
=&\!\!\!\dfrac{1+\la}{2}F(x_1+\e\man,\y;\mu(\e))+\dfrac{1-\la}{2}F(x_1-\e\man,\y;\mu(\e))+G^{\e}(x_1,\y;\la),\\
\la\in&\!\!\!\strip,\qquad G^{\e}(x_1,\y;\pm 1)=0,
\end{array}
\end{equation}
the {\it complete} intrinsically pinched system. %Here we are taking $G^{\e}=(G_{\e1},G_{\e2})$. 
As before we impose $G^{0}(x;\lambda)=0$.

\begin{theorem}\label{thm:m1}
Suppose that the system \eqref{ics} has an invariant manifold defined as the graph of the function $m_{\e}(\y)=\e m(\y)+\CO(\e^2)$. 
If the system 
\[
\begin{array}{rl}
\dot{\y}=&\!\!\!\e\mu'(0)F_\y(0,\y;0)+\dfrac{\e^2}{2}\Big(\mu''(0)F_\y(0,\y;0)+2\mu'(0)^2\dfrac{\p F_\y}{\p\mu}(0,\y;0)\\
&\;\;\hspace{5.5cm}+\;2\mu'(0)m_i(\y)\dfrac{\p F_\y}{\p x_1}(0,\y;0)\Big)
\end{array}
\]
is structurally stable and
\[
\mu'(0)\dfrac{\p F_1}{\p \mu}(0,\y;0)\dfrac{\p F_1}{\p x_1}F_1(0,\y;0)\neq0,
\]
then the function $G^{\e}(x_1,\y;\la)=(0,0)$ completes the system.
\end{theorem}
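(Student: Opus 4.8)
The plan is to verify that with $G^\e\equiv0$ the Filippov (linear) sliding field of the pinched system \eqref{iips} reproduces, up to order $\e^3$, the dynamics \eqref{is} induced on the true invariant manifold $\Sigma_\e=\{x_1=m_\e(\y)\}$, and then to upgrade this asymptotic agreement to topological equivalence by structural stability, exactly as in Theorem~\ref{t3}. Writing $F^\pm=F(x_1\pm\e\man,\y;\mu(\e))$, the completed system with $G^\e\equiv0$ is the convex combination $f^\e=\tfrac{1+\la}2F^++\tfrac{1-\la}2F^-$, and since $h=x_1$ the normal component is its first entry, $\fn^\e(0,\y;\la)=\tfrac{1+\la}2F_1(\e\man,\y;\mu(\e))+\tfrac{1-\la}2F_1(-\e\man,\y;\mu(\e))$.

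First I would establish existence of the sliding mode. Using $F_1(0,\y;0)=0$ and $\mu(\e)=\e\mu'(0)+\CO(\e^2)$ one factors a single power of $\e$ out of $\fn^\e$, so that (in the spirit of Theorem~\ref{t4} with $r=1$) the implicit function theorem applies to $\fn^\e/\e$; the required non-degeneracy reduces at leading order to $\man\,\p_{x_1}F_1(0,\y;0)\neq0$, which is contained in the stated hypothesis. This yields a $C^1$ family $\la_\e^*(\y)\in\strip$ with leading value $\la_0^*(\y)=-\mu'(0)\p_\mu F_1(0,\y;0)/\big(\man\,\p_{x_1}F_1(0,\y;0)\big)$. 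The geometric point is that this equals $m(\y)/\man$, the relative position of the pre-pinch manifold inside the collapsed strip $|x_1|\le\e\man$, so the choice $\man>\max|m_i|$ is exactly what forces $\la_\e^*(\y)\in(-1,1)$ (and the accompanying sign condition $F_1(\e\man,\cdot)F_1(-\e\man,\cdot)<0$ that makes the sliding attracting).

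Next I would match the tangential dynamics. Substituting $\la_\e^*$ into the $\y$-components gives the sliding field $f^{ns}=\mu(\e)\big[\tfrac{1+\la_\e^*}2F_\y(\e\man,\y;\mu)+\tfrac{1-\la_\e^*}2F_\y(-\e\man,\y;\mu)\big]$, which I would Taylor-expand to order $\e^2$; independently, expanding \eqref{is}, $\dot\y=\mu(\e)F_\y(m_\e(\y),\y;\mu(\e))$, to order $\e^2$ reproduces precisely the structurally stable vector field displayed in the hypothesis. The identity that makes the two $\e^2$-coefficients coincide is $m(\y)=-\mu'(0)\,\p_\mu F_1(0,\y;0)/\p_{x_1}F_1(0,\y;0)$, which I would not assume but derive by differentiating the invariance relation $F_1(m_\e,\y;\mu)=\mu\,\nabla_\y m_\e\cdot F_\y(m_\e,\y;\mu)$ for $\Sigma_\e$ at $\e=0$: its right-hand side is $\CO(\e^2)$, so the $\CO(\e)$ part of the left-hand side must vanish, giving the formula for $m$. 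With this identity the $\p_{x_1}F_\y$ contributions agree, while the $F_\y$ and $\p_\mu F_\y$ terms agree term-by-term, so $f^{ns}$ and \eqref{is} differ only at $\CO(\e^3)$.

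Finally, since the $\e^2$-truncation is assumed structurally stable and the omitted remainder is a $C^1$-small perturbation, Theorem~\ref{t3} yields that $f^{ns}$ is topologically equivalent to \eqref{is} on $\Sigma$, so $G^\e\equiv0$ completes the system. I expect the main obstacle to be the matching step: organizing the second-order Taylor expansions in the two arguments $x_1$ and $\mu$, and above all deriving and inserting the invariance identity for $m(\y)$ so that the nonzero Filippov value $\la_\e^*\approx m/\man$ exactly reconstructs the slope of the genuine invariant manifold. A secondary care point is verifying $\la_\e^*(\y)\in(-1,1)$ uniformly in $\y\in\ov U$, which again rests on the choice $\man>\max|m_i|$.
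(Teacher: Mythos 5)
Your proposal is correct and follows essentially the same route as the paper's proof: derive the identity $\mu'(0)\,\p_\mu F_1(0,\y;0)+m(\y)\,\p_{x_1}F_1(0,\y;0)=0$ from the invariance of $\Sigma_\e$, apply the implicit function theorem to $\fn^\e/\e$ to obtain $\la^*_\e(\y)=m(\y)/\man+\CO(\e)$ in $\strip$, expand the resulting sliding field and the induced dynamics \eqref{is} to second order in $\e$ and match them via that identity, then conclude by structural stability. Your added remarks on uniformity of $\la^*_\e\in(-1,1)$ via $\man>\max|m_i|$ and on the attracting-sliding sign condition are reasonable care points the paper leaves implicit, but they do not change the argument.
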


\begin{proof}
The graph $\Sigma_{\e}=\{(m_{\e}(\y),\y):\y\in U\}$ is an invariant manifold for system \eqref{ics}, so taking $h_{\e}(x_1,\y)=x_1-m_{\e}(\y)$ we have  
\[
\begin{array}{rl}
0=&\nabla h_{\e}(m_{\e}(\y),\y)F(m_{\e}(\y),\y;\mu(\e))\\
=&F_1(m_{\e}(\y),\y;\mu(\e))-\mu(\e) m_{\e}'(\y) F_\y(\e,m(\y),\y;\mu(\e)),
\end{array}
\] 
for $\e>0$ sufficiently small. Thus taking the derivative in $\e=0$ we obtain 
\begin{equation}\label{eq}
\mu'(0)\dfrac{\p F_1}{\p \mu}(0,\y;0)+m(\y) \dfrac{\p F_1}{\p x_1}(0,\y;0)=0.
\end{equation}
As previously we define 
\[
\begin{array}{rl}
\fn^{\e}(0,\y;\la)=&\nabla h(0,\y) f^{\e}(0,\y;\la)\\
=&\dfrac{1+\la}{2}F_1(\e\, \man,\y;\mu(\e))+\dfrac{1-\la}{2}F_1(-\e\, \man,\y;\mu(\e))\\
=&\e\left(\mu'(0)\dfrac{\p F_1}{\p \mu}(0,\y;0)+\man\,\la \dfrac{\p F_1}{\p x_1}(0,\y;0)\right)+\CO(\e^2).
\end{array}
\]
Now let $\CK(\y;\la,\e)=\dfrac{\fn^{\e}(0,\y;\la)}{\e}$. From \eqref{eq} we have that $\CK\left(\y;\dfrac{m(\y)}{\man},0\right)=0$, and by hypothesis
\[
\dfrac{\p \CK}{\p \la}(\y;\la,0)\Big|_{(\la,\e)=(m_i(\y)/\man,0)}=\man\dfrac{\p F_1}{\p x_1}(0,\y;0)\neq0.
\]
Hence from the implicit function theorem we have that for $\e>0$ sufficiently small there exists $\la(0,\y;\e)= \dfrac{m(\y)}{\man}+\e\ov{\la}+\CO(\e^2)$ such that $\la(0,\y;\e)\in\strip$ and $\fn^{\e}(0,\y;\la(0,\y;\e))=0$ for every $\y\in U$ and for $\e>0$ sufficiently small. It is easy to obtain an expression for $\ov{\la}$, but we do not require it here.

Writing $f=(f_1,f_\y)$, the nonlinear sliding mode $f(0,\y;\la(0,\y;\e))=(0,f_\y(0,\y;\la(0,\y;\e)))$ is given by
\begin{equation}\label{nlsm}
\begin{array}{rl}
f_\y(0,\y;\la_i(0,\y;\e))=&\e\mu'(0) F_\y(0,\y;0)+\e^2\Big(\dfrac{\mu''(0)}{2}F_\y(0,\y;0)\\
&+\mu'(0)^2\dfrac{\p F_\y}{\p \mu}(0,\y;0)+\mu'(0)m(\y)\dfrac{\p F_\y}{\p x_1}(0,\y;0)\Big)+\CO(\e^3).
\end{array}
\end{equation}
Hence, expanding system \eqref{is} about $\e=0$ in a Taylor series up to second order in $\e$, we conclude that the nonlinear sliding mode \eqref{nlsm} is equivalent to the system \eqref{is}.
\end{proof}

A prototype for systems satisfying the hypotheses of Theorem \ref{thm:m1} is $\dot x_1=x_1-\mu$, $\dot x_2=\mu x_2$, with a slow invariant manifold $x_1=\mu m_\mu(x_2)$ that becomes the critical manifold $x_1=0$ when $\mu=0$.

It is clear that the function $G^{\e}\equiv0$ does not complete the system if $k>1$. In particular we have the following.

\begin{theorem}\label{thm:m2}
Suppose that system \eqref{ics} has two invariant manifolds defined as the graphs of the functions $m_{\e}^i(\y)=\e m_i(\y)+\CO(\e^2)$ for $i=1,2$ where $\mu(\e)=\CO(\e^2)$. We assume $m_1\neq m_2$ and that
\[
\mu''(0)\dfrac{\p F_1}{\p \mu}(0,\y;0)\dfrac{\p^2 F_1}{\p x_1^2}F_1(0,\y;0)\neq0.
\]
If for $\e>0$ sufficiently small the system 
\[
\dot{\y}=\dfrac{\mu''(0)}{2}F_\y(0,\y;0)+\dfrac{\e}{6}\left(\mu'''(0)F_\y(0,\y;0)+3\mu''(0)m_i(\y)\dfrac{\p F_\y}{\p x_1}(0,\y;0)\right)
\]
is structurally stable for $i=1,2$, then the function 
\[
G^{\e}(x_1,\y;\la)=\e^2(\la^2-1)\left(\dfrac{\man^2}{2}\dfrac{\p^2F_1}{\p x_1^2}(0,\y;0)\,,\, 0\right)
\]
completes the system.
\end{theorem}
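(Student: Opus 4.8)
The plan is to follow the same three--stage strategy as the proof of Theorem \ref{thm:m1} --- extract algebraic relations from the invariance of the two manifolds, solve $\fn^{\e}=0$ by reducing to Theorem \ref{t4}, then match the resulting sliding vector field to the induced dynamics \eqref{is} --- but now the scaling $\mu(\e)=\CO(\e^2)$ pushes every leading balance up by one order in $\e$, and it is the quadratic--in--$\la$ term supplied by $G^{\e}$ that produces the \emph{second} sliding branch.

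First I would exploit invariance. Writing $h_{\e}^i(x_1,\y)=x_1-m_{\e}^i(\y)$, the condition $\nabla h_{\e}^i\cdot F=0$ on $\Sigma_{\e}^i$ reads $F_1(m_{\e}^i(\y),\y;\mu(\e))=\mu(\e)\,\nabla_\y m_{\e}^i(\y)\cdot F_\y(m_{\e}^i,\y;\mu(\e))$, whose right--hand side is $\CO(\e^3)$ since $\mu=\CO(\e^2)$ and $m_{\e}^i=\e m_i+\CO(\e^2)$. Expanding the left--hand side in $\e$ with $\mu(\e)=\tfrac{\mu''(0)}{2}\e^2+\CO(\e^3)$, the order--$\e$ term gives $m_i\,\frac{\p F_1}{\p x_1}(0,\y;0)=0$; since $m_1\neq m_2$ at least one $m_i\neq0$, so $\frac{\p F_1}{\p x_1}(0,\y;0)=0$. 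The order--$\e^2$ term then yields, for $i=1,2$, the relation $\tfrac12 m_i^2\,\frac{\p^2 F_1}{\p x_1^2}(0,\y;0)+\tfrac{\mu''(0)}{2}\frac{\p F_1}{\p\mu}(0,\y;0)=0$, which forces $m_2=-m_1$ and, using $\mu''(0)\,\frac{\p F_1}{\p\mu}\,\frac{\p^2 F_1}{\p x_1^2}\neq0$, gives $m_i\neq0$. This plays the role of \eqref{eq}.

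Next I would compute $\fn^{\e}(0,\y;\la)$ for the complete system with the stated $G^{\e}$. Because $h=x_1$ only the first component enters, and the second component of $G^{\e}$ vanishes; using $\frac{\p F_1}{\p x_1}(0,\y;0)=0$ the order--$\e$ contribution drops out, and the symmetric order--$\e^2$ term combines with $G_1^{\e}=\e^2(\la^2-1)\tfrac{\man^2}{2}\frac{\p^2 F_1}{\p x_1^2}(0,\y;0)$ to give
\[
\fn^{\e}(0,\y;\la)=\e^2\left(\tfrac{\man^2}{2}\la^2\,\tfrac{\p^2 F_1}{\p x_1^2}(0,\y;0)+\tfrac{\mu''(0)}{2}\,\tfrac{\p F_1}{\p\mu}(0,\y;0)\right)+\CO(\e^3),
\]
so that $\kappa_1\equiv0$ and $\kappa_2(\y;\la)=\man^2\la^2\,\frac{\p^2 F_1}{\p x_1^2}(0,\y;0)+\mu''(0)\,\frac{\p F_1}{\p\mu}(0,\y;0)$ in the notation of \eqref{Ksum}. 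Comparing with the order--$\e^2$ invariance relation shows $\ell_i(\y)=m_i(\y)/\man$ are roots of $\kappa_2(\y;\cdot)$, lying in $(-1,1)$ because $\man>\max|m_i|$, and $\frac{\p\kappa_2}{\p\la}(\y;\ell_i)=2\man\,m_i\,\frac{\p^2 F_1}{\p x_1^2}(0,\y;0)\neq0$ since $m_i\neq0$. Thus the hypotheses of Theorem \ref{t4} hold with $r=2$ for each root, yielding two $C^1$ families $\la_{\e,i}^*(\y)\in\strip$ with $\fn^{\e}(0,\y;\la_{\e,i}^*)=0$, i.e. two nonlinear sliding modes.

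Finally I would match dynamics. The $\y$--component of the sliding vector field \eqref{s4} carries no contribution from $G^{\e}$, so it equals $\mu(\e)$ times the $\la$--weighted average of $F_\y(\pm\e\man,\y;\mu(\e))$ evaluated at $\la=\la_{\e,i}^*=m_i/\man+\CO(\e)$; a Taylor expansion using $\mu(\e)=\tfrac{\mu''(0)}{2}\e^2+\tfrac{\mu'''(0)}{6}\e^3+\CO(\e^4)$ produces the same $\e$--series as the induced dynamics $\mu(\e)F_\y(m_{\e}^i,\y;\mu(\e))$ of \eqref{is}, namely $\e^2\tfrac{\mu''(0)}{2}F_\y(0,\y;0)$ at leading order with next term reproducing the $\e$--coefficient displayed in the statement after rescaling time by $\e^2$. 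Invoking the assumed structural stability of that leading $\y$--system, each sliding mode is topologically equivalent to the dynamics induced on $\Sigma_{\e}^i$, so $G^{\e}$ completes the system. I expect the main obstacle to be the second--order bookkeeping: one must be sure that $\frac{\p F_1}{\p x_1}(0,\y;0)=0$ is genuinely forced by the two invariant manifolds (this is what kills $\kappa_1$ and promotes the $G^{\e}$--generated $\kappa_2$ to the true leading term), and that the $\y$--expansions of the sliding mode and of \eqref{is} agree to the order at which structural stability is applied; the rest is a transcription of the mechanism in Theorem \ref{thm:m1} with orders shifted by the $\mu=\CO(\e^2)$ scaling.
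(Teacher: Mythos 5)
Your proposal follows essentially the same route as the paper's proof: it extracts the invariance relation $\mu''(0)\,\p F_1/\p\mu+m_i^2\,\p^2F_1/\p x_1^2=0$ from the two manifolds, expands $\fn^{\e}(0,\y;\la)$ to order $\e^2$ so that the $(\la^2-1)$ term of $G^{\e}$ turns the critical equation into $\man^2\la^2\,\p^2F_1/\p x_1^2+\mu''(0)\,\p F_1/\p\mu=0$ with the two roots $\la=m_i/\man$, applies the implicit function theorem to $\fn^{\e}/\e^2$ on each branch, and matches the $\y$--expansion of each sliding mode with \eqref{is} before invoking structural stability. The only substantive difference is that you make explicit the step that $\p F_1/\p x_1(0,\y;0)=0$ is forced by the first--order invariance condition together with $m_1\neq m_2$ (and hence that $m_2=-m_1$ and $m_i\neq0$), a fact the paper uses implicitly when its expansion of $\fn^{\e}$ passes directly to order $\e^2$.
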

\begin{proof}
The graph $\Sigma_{\e}^i=\{(m_{\e}^i(\y),\y):\,\y\in U\}$ is an invariant manifold for system \eqref{ics}, so taking $h_{\e}^i(x_1,\y)=x_1-m_{\e}^i(\y)$ we have that 
\[
\begin{array}{rl}
0=& F(m_i(\y),\y;\mu(\e))\cdot \nabla h_i(m_i(\y),\y)\\
=&F_1(\e m_i(\y),\y;\mu(\e))-\e m_i'(\y) F_\y(\e,m(\y),\y;\mu),
\end{array}
\] 
for $\e>0$ sufficiently small. Thus taking the second derivative at $\e=0$ we obtain 
\begin{equation}\label{eqp}
\mu''(0)\dfrac{\p F_1}{\p \mu}(0,\y;0)+m_i(\y)^2 \dfrac{\p^2 F_1}{\p x_1^2}(0,\y;0)=0.
\end{equation}
As previously we define 
\[
\begin{array}{rl}
\fn^{\e}(0,\y;\la)=& f^{\e}(0,\y;\la)\cdot \nabla h(0,\y)\\
=&\dfrac{1+\la}{2}F_1(\e\, \man,\y;\mu(\e))+\dfrac{1-\la}{2}F_1(-\e\, \man,\y;\mu(\e))+G^{\e}(0,\y;\la)\\
=&\dfrac{\e^2}{2}\left(\mu''(0)\dfrac{\p F_1}{\p \mu}(0,\y;0)+\man^2\,\la^2 \dfrac{\p^2 F_1}{\p x_1^2}(0,\y;0)\right)+\CO(\e^2).
\end{array}
\]
Now let $\CK(\y;\la,\e)=\dfrac{\fn^{\e}(0,\y;\la)}{\e^2}$. From \eqref{eqp} we have $\CK\left(\y;\dfrac{m_i(\y)}{\man},0\right)=0$, and by hypothesis
\[
\dfrac{\p \CK}{\p \la}(\y;\la,\e)\Big|_{(\la,\e)=(m_i(\y)/\man,0)}=\man\,m_i(\y)\dfrac{\p^2 F_1}{\p x_1^2}(0,\y;0)\neq0.
\]
Hence from the implicit function theorem, for $\e>0$ sufficiently small there exists $\la_i(0,\y;\e)= \dfrac{m_i(\y)}{\man}+\CO(\e)$ such that $\la_i(0,\y;\e)\in\strip$ and $\fn^{\e}(0,\y;\la_i(0,\y;\e))=0$ for every $\y\in U$ and for $i=1,2$.

The nonlinear sliding mode $f(0,\y;\la_i(0,\y;\e))=(0,f_\y(0,\y;\la_i(0,\y;\e)))$ is given by
\begin{equation}\label{nlsmp}
\begin{array}{rl}
f_\y(0,\y;\la_i(0,\y;\e))=&\dfrac{\e^2\mu''(0)}{2}F_\y(0,\y;0)+\dfrac{\e^2}{6}\left(\mu'''(0)F_\y(0,\y;0)\right.\\
&\left.+3\mu''(0)m_i(\y)\dfrac{\p F_\y}{\p x_1}(0,\y;0)\right)+\CO(\e^4),
\end{array}
\end{equation}
for $i=1,2$. Hence, expanding system \eqref{is} around $\e=0$ in Taylor series up to third order in $\e$, we conclude that the nonlinear sliding mode \eqref{nlsmp} is equivalent to the system \eqref{is} for each $i=1,2$.
\end{proof}

A prototype for systems satisfying the hypotheses of Theorem \ref{thm:m2} is $\dot x_1=x_1^2-\mu$, $\dot x_2=\mu x_2$, with slow manifolds $x_1=\pm\sqrt\mu m(x_2)$ which are normally hyperbolic for $\mu>0$, but which coalesce onto a non-hyperbolic critical manifold $x_1=0$ for $\mu=0$.

%%%%%%%%%%%%%%%%%%%%%%%%%%%%%%%
%%%%%%%%%%%%%%%%%%%%%%%%%%%%%%%%%%%%%%%%%%
%%%%%%%%%%%%%%%%%%%%%%%%%%%%%%%%%%%%%%%%%%
\section{Blow-up}\label{sec:blow}

In regularization we replaced the switching parameter $\lambda$ with a differentiable function $\phi_\delta(h(x))$. An alternative to this is to consider $\lambda$ itself as a variable on the surface $\Sigma$, and the way to use this to resolve nonlinear sliding modes was discussed in \cite{J}. For completeness a few remarks are pertinent here. 

%Taking coordinates where $h(x)=x_1$, on $x_1=0$ we write $x=p$. 
Taking regularization as a motivation, let us say that $\lambda$ can be expressed as the limit of a function $\displaystyle\lambda=\lim_{\delta\to0}\phi(h/\delta)$, then $$\dot\lambda=\frac1\delta\dot h\lim_{\delta\to0}\phi'(h/\delta)=\frac1{\tilde\delta}f\cdot\nabla h$$ where $\tilde\delta=\delta/\lim_{\delta\to0}\phi'(h/\delta)$%, and $\dot h=f_1$ is the first component of the discontinuous dynamical system (\ref{s2c})
. Assuming that $\phi$ is monotonically increasing on $h\in(-1,+1)$, that $\tilde \delta$ is finite, and moreover that there exist $r>0$ and $R>0$ with $0<r<\delta\ll1$, such that $\tilde\delta<R$ for $h\in(-1+r,+1-r)$. (For example, for a piecewise-continuous transition function where $\phi(h/\delta)=h/\delta$ on $|h|\le\delta$ we have $r$ arbitrarily small and $R=1$). We denote the time derivative with respect to $t/\tilde\delta$ by a prime, so $\tilde\delta\frac{d\;}{dt}\lambda\equiv\lambda'$, then let $\delta\to0$ and thus obtain a two-timescale system on $\Sigma$ given by
\begin{equation}
\begin{array}{rl}
\lambda'=&\!\!\!f_1(p;\la)=\dfrac{1+\la}{2}f_1^+(p)+\dfrac{1-\la}{2}f_1^-(p)+G_1(p;\la),\\
\dot p_i=&\!\!\!f_i(p;\la)=\dfrac{1+\la}{2}f_i^+(p)+\dfrac{1-\la}{2}f_i^-(p)+G_i(p;\la),
\end{array}
\end{equation}
$i=2,3,...,n$. The existence of slow invariant manifolds and their correspondence to sliding dynamics can be established similarly to the procedure for regularization in Section \ref{sec:reg}.

%%%%%%%%%%%%%%%%%%%%%%%%%%%%%%%%%%%%%%%%%%
%%%%%%%%%%%%%%%%%%%%%%%%%%%%%%%%%%%%%%%%%%
\section{Closing remarks}\label{sec:conc}

The equivalence between singular limits of continuous systems $\dot x=f(x)$, and discontinuous systems (\ref{s1}) resolved either as the traditional linear combination (\ref{s2c}) or its nonlinear extension (\ref{s2}), 
helps us understand their robustness as models of physical systems. In essence this states that systems which are only piecewise continuous are structurally stable to perturbations that smooth out their discontinuities. Although intuitively acceptable, this notion is not trivial, and nonlinear sliding modes make far richer dynamics possible close to the discontinuity than are apparent if nonlinearities are neglected. 

Particular forms for the function $G^{\e}$ that complete an intrinsically pinched system are given here for slow-fast dynamics with one or two slow critical invariant manifolds, but the result can certainly be extended, and a general theory may proceed along similar lines to normal forms of singularities, see e.g. \cite{ps}.

%%%%%%%%%%%%%%%%%%%%%%%%%%%%%%%%%%%%%%%%%%
%%%%%%%%%%%%%%%%%%%%%%%%%%%%%%%%%%%%%%%%%%
\section*{Acknowledgements}

MRJ is supported by EPSRC grant EP/J001317/2.  DDN is supported by a FAPESP--BRAZIL grant 2012/10231-7.


\begin{thebibliography}{99}


\bibitem{feck} {\sc J. Awrejcewicz and M. Fe\v ckan and P. Olejnik} {\it On continuous approximation
of discontinuous systems} (2005) Nonlinear Analysis {\bf 62} 1317--1331

\bibitem{simic} {\sc M. E. Broucke and C. C. Pugh and S. N. Simi\'c} {\it Structural stability of piecewise smooth systems} (2001) Comput. Appl. Math. {\bf 20} 51--90

\bibitem{BST} \textsc{C. Buzzi, P.R. da Silva and M.A. Teixeira},
\textit{A singular approach to discontinuous vector fields on the plane} (2006) J. Diff. Equations {\bf
231} 633--655

\bibitem{DM} {\sc M. Desroches, M.R. Jeffrey} (2011) {\it Canards and curvature: nonsmooth approximation by pinching} Nonlinearity {\bf 24} 1655--1682

\bibitem{bc08} \textsc{M. di Bernardo, C. J. Budd, A. R. Champneys and P. Kowalczyk},
\textit{Piecewise-Smooth Dynamical Systems: Theory and Applications} (2008) Springer

\bibitem{fenichel} {\sc N. Fenichel} (1979) {\it Geometric singular perturbation theory for ordinary differential equations} J. Differential Equations {\bf 31}(1):53--98

\bibitem{F} \textsc{A. F. Filippov},
\textit{Differential equations with discontinuous righthand side}, Mathematics and Its Applications (1988) Kluwer Academic Publishers, Dordrecht

\bibitem{H} {\sc N. Guglielmi and E. Hairer} {\it Classification of hidden dynamics in discontinuous dynamical systems} (2015) SIADS, in press February 2015

\bibitem{Hill} {\sc A. V. Hill} {\it The possible effects of the aggregation of the molecules of haemoglobin on its dissociation curves} (1910) Proc. Physiol. Soc. {\bf 40} iv-vii

\bibitem{J} {\sc M. R. Jeffrey} {\it Hidden dynamics in models of discontinuity and switching} (2014) Physica D {\bf 273-274} 34--45

\bibitem{Jiso} {\sc M. R. Jeffrey} {\it Dynamics at a switching intersection: hierarchy, isonomy, and multiple-sliding} (2014) SIADS {\bf 13} (3) 1082-1105

\bibitem{JSimp} {\sc M. R. Jeffrey and D. J. W. Simpson} {\it Non-Filippov dynamics arising from the smoothing of nonsmooth systems, and its robustness to noise} (2013), Nonlinear Dynamics {\bf 76}(2) 1395--1410

\bibitem{jones} {\sc C. K. R. T. Jones} {\it Geometric singular perturbation theory In Dynamical systems}  (1995) Lecture Notes in Math. 1609: 44--118, Springer (Berlin)

\bibitem{LST2} \textsc{J. Llibre, P.R. da Silva and M.A. Teixeira},
\textit{Regularization of discontinuous vector fields via singular perturbation} (2006) J. Dynam. Differential Equations {\bf 19
} 2 309--331

\bibitem{LST} \textsc{J. Llibre, P.R. da Silva and M.A. Teixeira},
\textit{Sliding vector fields via slow--fast systems} (2008) Bulletin of the Belgian Mathematical Society {\bf
15} 851--869

\bibitem{P} {\sc M.M. Peixoto} {\it On structural stability} (1959) Annals of Mathematics, Second Series, {\bf 69} 199--222.

\bibitem{ps} \textsc{T. Poston and I. N. Stewart},
\textit{Catastrophe theory and its applications} (1996) Dover

\bibitem {ST} {\sc J. Sotomayor and M.A. Teixeira},
{\it Regularization of Discontinuous Vector Field} (1998) The 1995 International
Conference on Differential Equations, Lisboa, World Sci. Publ., 207--223

\bibitem {T} {\sc M.A. Teixeira},
{\it Generic bifurcation of sliding vector fields} (1993) J. Math. Anal. Appl., {\bf 176}, 436--457

\end{thebibliography}
\end{document}